\documentclass[11pt]{amsart}

\usepackage{amscd}
\usepackage{amsmath, amssymb}
\usepackage{amsthm}
\usepackage{amsfonts}
\usepackage{enumitem}
\usepackage{verbatim}
\newcommand{\ddt}{\frac{\partial}{\partial t}}
\newcommand{\ddbar}{\sqrt{-1} \partial \overline{\partial}}
\newcommand{\Ric}{\mathrm{Ric}}
\newcommand{\ov}[1]{\overline{#1}}
\newcommand{\tr}[2]{\mathrm{tr}_{#1}{#2}}
\newcommand{\omegahatt}{\hat{\omega}_t}

\newcommand{\omegatilde}{\tilde{\omega}}
\newcommand{\gtilde}{\tilde{g}}
\newcommand{\omegahat}{\hat{\omega}}
\newcommand{\ghat}{\hat{g}}
\newcommand{\C}{\mathbb{C}}
\newcommand{\R}{\mathbb{R}}

\title{Long time existence of the $(n-1)$-plurisubharmonic flow}
\author{Matthew Gill}
\thanks{Supported by NSF RTG grant DMS-0838703.}
\address{Department of Mathematics, University of California, Berkeley, 970 Evans Hall \#3840, Berkeley, CA 94720-3840 USA}
\email{mfgill@math.berkeley.edu	}

\begin{document}
\newcounter{remark}
\newcounter{theor}
\setcounter{remark}{0}
\setcounter{theor}{1}
\newtheorem{claim}{Claim}
\newtheorem{theorem}{Theorem}[section]
\newtheorem{lemma}[theorem]{Lemma}
\newtheorem{corollary}[theorem]{Corollary}
\newtheorem{proposition}[theorem]{Proposition}
\newtheorem{question}{question}[section]
\newtheorem{defn}{Definition}[theor]
\newtheorem{conjecture}[theorem]{Conjecture}

\begin{abstract}
We consider the $(n-1)$-plurisubharmonic flow, suggested by Tosatti-Weinkove, and prove a formula for its maximal time of existence. This includes estimates that will be useful in further investigating the flow.
\end{abstract}

\maketitle

\section{Introduction}

Let $M$ be a compact complex manifold of dimension $n > 2$ with $g$ and $g_0$ Hermitian metrics on $M$. We define the associated real $(1,1)$-form
\begin{equation*}
\omega = \sqrt{-1} g_{i\ov{j}} dz^i \wedge dz^{\ov{j}}
\end{equation*}
which will will also refer to as a metric. The $(n-1)$-plurisubharmonic flow is the equation
\begin{equation}\label{pshflow}
\ddt \omega_t^{n-1} = -(n-1) \Ric^C(\omega_t) \wedge \omega^{n-2}, \ \ \ \omega_t | _{t=0} = \omega_0.
\end{equation}
where $\Ric^C(\omega_t) = -\ddbar \log \omega_t^n$ is the Chern-Ricci form of $\omega_t$. In the case of $n=2$, \eqref{pshflow} becomes the Chern-Ricci flow (see \cite{G,G3,GS,LV,Nie,ShW2,TW2,TW3,TWY}). This flow was originally suggested by Tosatti-Weinkove in their work on the elliptic Monge-Amp\`ere equation for $(n-1)$-plurisubharmonic forms \cite{TW4,TW5}.

We say that a metric $\omega_0$ is {\em balanced} \cite{Michelsohn} if
\begin{equation*}
d \omega_0^{n-1} = 0
\end{equation*}
{\em Gauduchon} \cite{Gaud1} if
\begin{equation*}
\partial \ov{\partial} \omega^{n-1} = 0
\end{equation*}
and {\em strongly Gauduchon} (recently introduced by Popovici in \cite{Popo1}) if
\begin{equation*}
\ov{\partial} \omega_0^{n-1} \textrm{ is } \partial \textrm{-exact}.
\end{equation*}
When $\omega$ is a K\"ahler metric
\begin{equation*}
d\omega = 0
\end{equation*}
then the $(n-1)$-plurisubharmonic flow preserves all three of the above conditions imposed on $\omega_0$. If instead $\omega$ is an {\em Astheno-K\"ahler} metric (see \cite{JY})
\begin{equation*}
\partial \ov{\partial} \omega^{n-2} = 0
\end{equation*}
the flow preserves the Gauduchon and strongly Gauduchon conditions, but not necessarily the balanced condition. Indeed, the flow is equivalent to
\begin{equation*}
\ddt \omega_t^{n-1} = -(n-1) \Ric^C(\omega) \wedge \omega^{n-2} + \ddbar \theta(t) \wedge \omega^{n-2}
\end{equation*}
where
\begin{equation*}
\theta(t) = \log \frac{\det (g_t)^{n-1}}{\det g^{n-1}}.
\end{equation*}
Defining
\begin{equation*}
\Phi_t = \omega_0^{n-1} - t (n-1) \Ric^C(\omega) \wedge \omega^{n-2}
\end{equation*}
we see that a solution to \eqref{pshflow} is of the form
\begin{equation*}
\omega_t^{n-1} = \Phi_t + \ddbar u \wedge \omega^{n-2}  
\end{equation*} 
for some real valued function $u$ on $M$. One can check that if $\omega$ is K\"ahler and $\omega_0$ is balanced (respectively Gauduchon, strongly Gauduchon), then the family of metrics $\omega_t$ is balanced (respectively Gauduchon, strongly Gauduchon) for all $t$ along the flow. Similarly for $\omega$ Astheno-K\"ahler and $\omega_0$ Gauduchon or strongly Gauduchon. 

We prove the following formula for the maximal time of existence of the flow assuming $\omega_0$ and $\omega$ are Hermitian metrics.
\begin{theorem}\label{pshflowthm}
Let $M$ be a compact complex manifold of dimension $n \geq 3$ and let $\omega_0$ and $\omega$ be Hermitian metrics on $M$. Then there exists a unique solution of the $(n-1)$-plurisubharmonic flow \eqref{pshflow} on the maximal time interval $[0,T)$ where
\begin{equation*}
T = \sup \left \{ t > 0 \ | \ \exists \psi \in C^{\infty}(M) \textrm{ {\em such that} } \Phi_t + \ddbar \psi \wedge \omega^{n-2} > 0 \right \}. 
\end{equation*}
\end{theorem}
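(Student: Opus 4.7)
The plan is to reduce \eqref{pshflow} to a scalar parabolic equation for a single function $u$, establish uniform $C^\infty$ estimates on every compact subinterval $[0,T']\subset[0,T)$, and close the argument by a continuation step together with an openness argument at $t=T$.

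Following the discussion in the introduction, write $\omega_t^{n-1}=\Phi_t+\ddbar u\wedge\omega^{n-2}$; the pointwise Michelsohn bijection between positive $(1,1)$-forms and positive $(n-1,n-1)$-forms smoothly recovers $\omega_t$ from $u$. Differentiating the ansatz and using $\Ric^C(\omega_t)=\Ric^C(\omega)-\ddbar\log(\omega_t^n/\omega^n)$, \eqref{pshflow} rearranges to
\[
\ddbar\!\left(u_t-(n-1)\log\frac{\omega_t^n}{\omega^n}\right)\wedge\omega^{n-2}=0.
\]
The pointwise hard Lefschetz isomorphism $h\mapsto h\wedge\omega^{n-2}$ on real $(1,1)$-forms then forces the parenthesized quantity to be pluriharmonic, hence constant on $M$; absorbing this time-dependent constant into $u$ (a harmless gauge) gives the scalar equation $u_t=(n-1)\log(\omega_t^n/\omega^n)$. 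This equation is strictly parabolic and concave in $\ddbar u$, so short-time existence follows from standard theory.

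Now fix $T'\in(0,T)$ and pick $\psi\in C^\infty(M)$ with $\Phi_{T'}+\ddbar\psi\wedge\omega^{n-2}>0$, as guaranteed by the definition of $T$. The convex combination
\[
\omegahatt^{n-1}:=\frac{T'-t}{T'}\,\omega_0^{n-1}+\frac{t}{T'}\bigl(\Phi_{T'}+\ddbar\psi\wedge\omega^{n-2}\bigr)=\Phi_t+\frac{t}{T'}\,\ddbar\psi\wedge\omega^{n-2}
\]
is strictly positive on $[0,T']$ and defines a smooth family of reference metrics $\omegahatt$ uniformly equivalent to $\omega$. With $\tphi:=u-\tfrac{t}{T'}\psi$, the flow takes the normalized form
\[
\omega_t^{n-1}=\omegahatt^{n-1}+\ddbar\tphi\wedge\omega^{n-2},\qquad \tphi_t=(n-1)\log\frac{\omega_t^n}{\omega^n}-\frac{\psi}{T'}.
\]
The heart of the argument is uniform $C^\infty$ control of $\tphi$ and $\omega_t$ on $[0,T']$. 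A $C^0$ bound for $\tphi$ and for $\tphi_t$ follows from the parabolic maximum principle applied to $\tphi\pm At$ and related quantities, using the uniform equivalence $\omegahatt\simeq\omega$, in the spirit of Tosatti--Weinkove's Chern--Ricci treatment. For the second-order estimate I would run the maximum principle on a quantity of the form $\log\tr{\omega}{\omega_t}-A\tphi$ (possibly modified by a term $B|\nabla\tphi|^2_\omega$), absorbing the torsion contributions of $\omega$ and the commutator terms produced by the factor $\omega^{n-2}$ using the technique Tosatti--Weinkove developed for the elliptic $(n-1)$-plurisubharmonic Monge--Amp\`ere equation in \cite{TW4,TW5}. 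Once $\omega_t$ is uniformly equivalent to $\omega$, Evans--Krylov for concave fully nonlinear parabolic equations yields a uniform $C^{2,\alpha}$ bound for $\tphi$, and a Schauder bootstrap gives all higher derivatives.

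These estimates extend the flow past every $T'<T$, hence a smooth solution exists on all of $[0,T)$; uniqueness is immediate from the scalar reduction. Conversely, a smooth solution up to $t=T$ would make $\Phi_T+\ddbar u(\cdot,T)\wedge\omega^{n-2}=\omega_T^{n-1}>0$, and since strict positivity is an open condition and $\Phi_t$ varies smoothly in $t$, one would have $\Phi_{T+\varepsilon}+\ddbar u(\cdot,T)\wedge\omega^{n-2}>0$ for some $\varepsilon>0$, contradicting the definition of $T$ as the supremum. The main technical obstacle is the $C^2$ estimate: because $\omega$ is only Hermitian and the flow is posed on $(n-1,n-1)$-forms rather than on $(1,1)$-forms, the Laplacian-type quantity $\tr{\omega}{\omega_t}$ generates numerous error terms involving the torsion of $\omega$ and the fixed factor $\omega^{n-2}$, and arranging for all of these to be absorbed into the good second-order terms produced by the maximum principle is exactly the issue treated by Tosatti--Weinkove in \cite{TW4,TW5}, which will need to be carried over to the parabolic setting.
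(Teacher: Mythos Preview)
Your overall architecture --- scalar reduction, reference path $\omegahatt$, $C^0/\dot u$ bounds by maximum principle, then Evans--Krylov and bootstrap --- matches the paper. The gap is in the second-order step. For this operator the quantity $\log\tr{\omega}{\omega_t}-A\tphi$ (even with an additive $B|\nabla\tphi|_\omega^2$) does \emph{not} close to a uniform $C^2$ bound: following Hou--Ma--Wu and Tosatti--Weinkove one must use a test function of the shape
\[
\log(\eta_{i\ov j}\xi^i\ov{\xi^j})+c\log\bigl(g^{p\ov q}\eta_{i\ov q}\eta_{p\ov j}\xi^i\ov{\xi^j}\bigr)+\varphi(|\nabla u|_g^2)+\nu(u),
\]
with $\eta_{i\ov j}=(\tr{g}{\gtilde})g_{i\ov j}-(n-1)\gtilde_{i\ov j}$ and carefully chosen concave $\varphi,\nu$, and the output is only the \emph{conditional} estimate
\[
\tr{\omega}{\omegatilde}\ \le\ C\Bigl(\sup_{M\times[0,S)}|\nabla u|_g^2+1\Bigr),
\]
not a self-contained $C^2$ bound. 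This is exactly what happens in \cite{HMW,TW4,TW5} in the elliptic case, and the paper carries it over parabolically; the torsion and $\omega^{n-2}$ error terms are absorbed, but only at the price of the gradient on the right-hand side.

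Consequently an independent $C^1$ estimate is required, and this is the step your proposal omits entirely. In the paper (and in \cite{TW4}) it is obtained by a blow-up argument: assuming $\sup|\nabla u|\to\infty$ along a sequence of times, one rescales to produce a bounded, Lipschitz, maximal $(n-1)$-PSH function on $\C^n$ and invokes the Liouville theorem of Tosatti--Weinkove (built on Dinew--Ko\l odziej) to force it constant, contradicting $|\nabla\hat u_j|(0)=1$. Without this step you never get uniform equivalence $\omegatilde\simeq\omega$, and the Evans--Krylov argument cannot start. So the missing ingredient is not a refinement of the $C^2$ computation but a genuinely separate first-order estimate via blow-up and Liouville.
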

Note that if we define an equivalence relation of real $(n-1,n-1)$-forms by
\begin{equation*}
\Psi \sim \Psi' \iff \Psi = \Psi' + \ddbar \psi \wedge \omega^{n-2} \textrm{ for some } \psi \in C^{\infty}(M)
\end{equation*}
then $T$ depends only on $\omega$ and the equivalence class of $\omega_0^{n-1}$. This is analogous to the result of Tian-Zhang for the K\"ahler-Ricci flow \cite{TZ} and of Tosatti-Weinkove for the Chern-Ricci flow \cite{TW2}. Much like these related results, this theorem suggests that the $(n-1)$-plurisubharmonic flow is a natural object of study that reflects the geometry of the manifolds. 

Every Hermitian metric is conformal to a Gauduchon metric \cite{Gaud1} on a compact complex manifold. However if $\omega$ is only assumed to be Gauduchon then the $(n-1)$-plurisubharmonic flow \eqref{pshflow} does not preserve the Gauduchon condition of $\omega_0$. To alleviate this problem we consider the new flow
\begin{equation}\label{generalpsh}
\ddt \omega_t^{n-1} = -(n-1) \Ric^C(\omega_t) \wedge \omega^{n-2} - (n-1) \textrm{Re} \left( \sqrt{-1} \partial \left( \log \omega_t^n \right) \wedge \ov{\partial} (\omega^{n-2}) \right).
\end{equation}
If the fixed metric $\omega$ is Gauduchon and the initial metric $\omega_0$ is Gauduchon or strongly Gauduchon, so is the solution to \eqref{generalpsh} for as long as it exists. To see this, we compute as above. A solution to this new flow \eqref{generalpsh} is of the form
\begin{equation*}
\omega_t^{n-1} = \hat{\Phi}_t + \ddbar u \wedge \omega^{n-2} + \textrm{Re} \left( \sqrt{-1} \partial u \wedge \ov{\partial}( \omega^{n-2} )\right)
\end{equation*}
where
\begin{equation*}
\hat{\Phi}_t =  \omega_0^{n-1} - t(n-1)\left( \Ric^C(\omega) \wedge \omega^{n-2} + \textrm{Re}\left(  \sqrt{-1} \partial \left( \log \omega^n \right) \wedge \ov{\partial}( \omega^{n-2})\right) \right).
\end{equation*}
We conjecture that this flow has a similar theorem for its maximal existence time, but we are currently unable to prove the estimates that would give this result. 
\begin{conjecture}
Let $M$ be a compact complex manifold of dimension $n \geq 3$, $\omega$ a Gauduchon metric, and $\omega_0$ a Hermitian metric on $M$. Then there exists a unique solution of \eqref{generalpsh} on the maximal time interval $[0,T)$ where
\begin{equation*}
T = \sup \left \{ t > 0 \ \Bigg\vert 
\begin{array}{cc}
& \exists \  \psi \in C^{\infty}(M) \textrm{ {\em such that} } \\
& \hat{\Phi}_t + \ddbar \psi \wedge \omega^{n-2} \\
& \ \ + \textrm{{\em Re}} \left( \sqrt{-1} \partial \psi \wedge \ov{\partial}( \omega^{n-2} )\right)> 0 
\end{array}
\right \}. 
\end{equation*}
\end{conjecture}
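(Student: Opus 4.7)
The plan is to parallel the proof of Theorem \ref{pshflowthm} as closely as possible: fix a smooth reference family $\omegahatt$ interpolating between $\omega_0$ and a positive candidate at time $T_0 < T$, reduce \eqref{generalpsh} to a scalar parabolic equation for a function $\varphi$, derive uniform a priori estimates on $\varphi$, $\partial \varphi$, and $\ddbar \varphi$ on $[0, T_0]$, and then use these estimates to extend the flow past $T_0$. Since $T_0 < T$ is arbitrary, this yields existence on $[0, T)$. Conversely, if a smooth solution existed on $[0, T+\varepsilon)$, then $\omega_{T+\varepsilon/2}^{n-1}$ would give, upon rewriting it using $\hat\Phi_{T+\varepsilon/2}$, an admissible $\psi$ at time $T + \varepsilon/2$, contradicting the definition of $T$.

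For the reduction, by the definition of $T$ choose $\psi \in C^\infty(M)$ such that $\hat\Phi_{T_0} + \ddbar \psi \wedge \omega^{n-2} + \textrm{Re}(\sqrt{-1} \partial \psi \wedge \ov{\partial}(\omega^{n-2})) > 0$, interpolate via $\eta(t) = (t/T_0)\psi$, and define $\omegahatt$ as the unique Hermitian form whose $(n-1)$st power equals $\hat\Phi_t + \ddbar \eta(t) \wedge \omega^{n-2} + \textrm{Re}(\sqrt{-1}\partial \eta(t) \wedge \ov{\partial}(\omega^{n-2}))$; existence and uniqueness of $\omegahatt$ come from the $(n-1)$-Monge-Amp\`ere correspondence of \cite{TW4,TW5}. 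Writing $\omega_t^{n-1} = \omegahatt^{n-1} + \ddbar \varphi \wedge \omega^{n-2} + \textrm{Re}(\sqrt{-1}\partial \varphi \wedge \ov{\partial}(\omega^{n-2}))$ converts \eqref{generalpsh} into a scalar parabolic equation of the form $\ddt \varphi = \log(\det g_t / \det \ghat_t) + h(t, x)$.

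The main obstacle is the second-order estimate. In Theorem \ref{pshflowthm} one estimates $\log \tr{\omega}{\omega_t} - A\varphi$ via the maximum principle, exploiting the fact that $\omega_t^{n-1}$ differs from its reference only through the purely second-order term $\ddbar\varphi \wedge \omega^{n-2}$; the $(n-1)$-form structure used in \cite{TW4,TW5} then lets the bad third-order terms cancel. For \eqref{generalpsh}, the additional first-order piece $\textrm{Re}(\sqrt{-1} \partial \varphi \wedge \ov{\partial}(\omega^{n-2}))$ contributes, upon differentiating twice, further third-order terms in $\varphi$ that are not obviously controlled by $\tr{\omega_t}{\omega}$. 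I would look for an auxiliary quantity of the form $\log \tr{\omega}{\omega_t} - A\varphi + B|\partial \varphi|^2_\omega$, tuning $A$ and $B$ so the new error terms are absorbed; failing that, one could try to adapt a Cherrier-type argument to the torsion terms at hand.

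Given $C^0$, $C^1$, and $C^2$ bounds for $\varphi$, Evans-Krylov theory (or a Calabi-type $C^3$ estimate) yields $C^{2,\alpha}$ and then all higher-order bounds by standard parabolic bootstrap, allowing the flow to be continued past $T_0$. The $C^0$ bound should follow from Moser iteration combined with the elliptic techniques of \cite{TW4,TW5}, and the $C^1$ estimate from a standard maximum principle argument for $|\partial \varphi|^2_\omega$ once $\varphi$ and its complex Hessian are under control; so in my assessment the full difficulty is concentrated in the second-order estimate, which is precisely where the author flags the obstruction.
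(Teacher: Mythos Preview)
The statement you are addressing is presented in the paper as a \emph{conjecture}, not a theorem; the paper gives no proof and says explicitly that ``we are currently unable to prove the estimates that would give this result.'' So there is no proof in the paper to compare against, and your proposal is, by your own admission, not a proof either: it is a strategy outline that terminates at the second-order estimate, which you correctly flag as the obstruction.

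Your identification of the difficulty is accurate and matches the paper's own discussion: the extra first-order piece $\textrm{Re}(\sqrt{-1}\,\partial\varphi\wedge\ov{\partial}(\omega^{n-2}))$ produces, after two differentiations, third-order cross terms that are not absorbed by the machinery of \cite{TW5}. The paper goes further and notes that this missing estimate is exactly the one required to resolve Gauduchon's conjecture via the elliptic equation \eqref{gauduchoneq}, so your suggestion to ``tune $A$ and $B$'' in an auxiliary quantity is optimistic --- if such a choice worked, it would simultaneously settle a well-known open problem.

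Two smaller remarks on your outline, for when the analogous program is attempted: in the proof of Theorem \ref{pshflowthm} the $C^0$ bound comes from a direct parabolic maximum principle (Lemma \ref{0est}), not Moser iteration; and the $C^1$ bound is obtained via a blow-up argument and the Liouville theorem of \cite{TW4} (Lemma \ref{1est}) \emph{after} the second-order estimate $\tr{\omega}{\omegatilde}\le C(|\nabla u|^2+1)$ is in hand, rather than from a separate maximum principle on $|\partial\varphi|^2_\omega$. Neither point rescues the argument, but the logical order matters if you try to adapt the scheme to \eqref{generalpsh}.
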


The estimates required to prove the above conjecture are the same as those needed to prove Gauduchon's conjecture:
\begin{conjecture}(Gauduchon, 1977 \cite{Gaud2})
Let $M$ be a compact complex manifold and let $\psi$ be a closed real $(1,1)$-form on $M$ with $[\psi] = c^{BC}_1(M)$. Then there exists a Gauduchon metric $\omegatilde$ on $M$ with
\begin{equation*}
\Ric^C(\omegatilde) = \psi.
\end{equation*}
\end{conjecture}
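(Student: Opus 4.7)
The plan is to reduce Gauduchon's conjecture to a fully nonlinear elliptic PDE of the same type that controls the flow \eqref{generalpsh}, and to solve it by the continuity method. Since every Hermitian metric $\omega$ satisfies $[\Ric^C(\omega)] = c^{BC}_1(M)$ in Bott-Chern cohomology, the hypothesis $[\psi] = c^{BC}_1(M)$ produces a smooth real function $F$ with $\psi = \Ric^C(\omega) + \ddbar F$. Fixing $\omega$ to be Gauduchon (possible by Gauduchon's theorem) then recasts the prescribed-Ricci condition $\Ric^C(\omegatilde) = \psi$ as the prescribed-volume equation
\begin{equation*}
\omegatilde^n = e^{c - F}\,\omega^n,
\end{equation*}
for a constant $c$ fixed by normalizing the total volume.

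I would parametrize the unknown Gauduchon metric $\omegatilde$ by the same ansatz that appears in \eqref{generalpsh}:
\begin{equation*}
\omegatilde^{n-1} = \omega^{n-1} + \ddbar u \wedge \omega^{n-2} + \textrm{Re}\left(\sqrt{-1}\,\partial u \wedge \ov{\partial}(\omega^{n-2})\right),
\end{equation*}
which preserves $\partial \ov{\partial}\omegatilde^{n-1} = 0$ since $\omega$ is Gauduchon. Substituting this into the volume equation produces a scalar fully nonlinear PDE for the real potential $u$; its leading part is of complex Monge-Amp\`ere type (acting on $\omegatilde^{n-1}$ via the Hodge $\ast$ against $\omega$), with additional zeroth- and first-order terms coming from the torsion of $\omega$ and from the Gauduchon correction. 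I would then set up the continuity method along the family $\omegatilde_s^{\,n} = e^{c_s - sF}\omega^n$, $s \in [0,1]$; openness is standard ellipticity of the linearization, and closedness reduces to uniform a priori $C^{k,\alpha}$ bounds independent of $s$.

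The required estimates proceed in the usual order: a $C^0$ bound on the normalized potential by Moser iteration or an adapted Alexandroff-Bakelman-Pucci argument; a second-order estimate $\tr{\omega}{\omegatilde} \le C$ by the maximum principle applied to a barrier of the form $\log \tr{\omega}{\omegatilde} - A u$ for $A$ large; a $C^1$ estimate either directly in the style of Phong-Sturm or interpolated from $C^0$ and $C^2$ by a blow-up argument; and finally Evans-Krylov with Schauder bootstrap for higher regularity.

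The main obstacle will be the second-order estimate. The Gauduchon correction $\textrm{Re}(\sqrt{-1}\partial u \wedge \ov{\partial}(\omega^{n-2}))$ couples first derivatives of $u$ to the torsion $\nabla \omega$, and when the linearized operator is applied to $\log \tr{\omega}{\omegatilde}$ one gets cross-terms involving $u$-derivatives and $\nabla \omega$ that are not absorbed by the standard negative concavity contribution. This is precisely the obstacle the author flags in disclaiming a proof of the parabolic conjecture above, and progress will likely require either a refined barrier tailored to the first-order torsion correction or a new algebraic identity relating $\omegatilde^{n-1}$ and $\omegatilde$ that tames these cross-terms. The $C^0$ step is also subtler than in the Hermitian Monge-Amp\`ere setting, since $u$ enters only through $\omegatilde^{n-1}$ rather than directly through $\omegatilde$, blocking the standard maximum-principle control on $\sup u - \inf u$ and forcing an integral approach.
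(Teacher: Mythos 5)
This statement is presented in the paper as an open \emph{conjecture}, not a theorem: the paper offers no proof, and explicitly states that ``the missing ingredient for the solution is a second order estimate for $u$ solving \eqref{gauduchoneq}.'' Your proposal correctly reproduces the known reduction (due to Popovici and to Tosatti--Weinkove, cited in the paper): write $\psi = \Ric^C(\omega) + \ddbar F$ for a Gauduchon reference metric $\omega$, parametrize $\omegatilde^{n-1}$ by the ansatz $\omega_0^{n-1} + \ddbar u \wedge \omega^{n-2} + \mathrm{Re}\left(\sqrt{-1}\,\partial u \wedge \ov{\partial}(\omega^{n-2})\right)$, and convert the prescribed Chern--Ricci condition into the Monge--Amp\`ere-type equation \eqref{gauduchoneq}, to be solved by the continuity method. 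That much is correct and matches the paper's framing.

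But there is a genuine gap, and you name it yourself: the second-order a priori estimate for solutions of \eqref{gauduchoneq} with the Gauduchon correction term $\mathrm{Re}\left(\sqrt{-1}\,\partial u \wedge \ov{\partial}(\omega^{n-2})\right)$ is precisely what is not known, and without it the closedness step of the continuity method fails. Your suggested barrier $\log \tr{\omega}{\omegatilde} - Au$ is too crude even for the equation \emph{without} the correction term: there the known argument (Hou--Ma--Wu, Tosatti--Weinkove, and the parabolic version in Lemma \ref{2est} of this paper) requires the much more delicate quantity built from the largest eigenvalue of $\eta_{i\ov j}$ together with $\varphi(|\nabla u|_g^2) + \nu(u)$, yields only a bound in terms of $\sup |\nabla u|_g^2$, and must be supplemented by a blow-up/Liouville argument for the gradient. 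For the corrected equation, the first-order terms coupled to the torsion $\ov{\partial}(\omega^{n-2})$ produce exactly the uncontrolled cross-terms you describe, and no one (at the time of this paper) knew how to absorb them. So the proposal is an accurate map of the problem and of where it breaks, but it does not constitute a proof of the conjecture, and it could not, since the conjecture was open when the paper was written.
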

This is a generalization of the famous Calabi-Yau theorem in K\"ahler geometry \cite{Yau}. Popovici \cite{Popo2} and Tosatti-Weinkove \cite{TW5} have both recently shown that proving Gauduchon's conjecture is equivalent to solving
\begin{equation}\label{gauduchoneq}
\det \left( \Phi_u \right) = e^{F+b} \det \left( \omega^{n-1} \right)
\end{equation}
with
\begin{equation*}
\Phi_u = \omega_0^{n-1} + \ddbar u \wedge \omega^{n-2} + \textrm{Re} \left( \sqrt{-1} \partial u \wedge \ov{\partial} (\omega^{n-2}) \right) > 0
\end{equation*}
with $\sup_M u = 0$ and $\omega$ Gauduchon. The missing ingredient for the solution is a second order estimate for $u$ solving \eqref{gauduchoneq}. Consider \eqref{gauduchoneq} where we remove the last term in the definition of $\Phi_u$: 
\begin{equation}\label{elliptic}
\det \left(\omega_0^{n-1} + \ddbar u \wedge \omega^{n-2} \right) = e^{F+b} \det \left( \omega^{n-1} \right)
\end{equation}
with
\begin{equation*}
\omega_0^{n-1} + \ddbar u \wedge \omega^{n-2} > 0, \ \ \ \sup_M u = 0
\end{equation*}
Fu-Wang-Wu \cite{FWW2} proved that \eqref{elliptic} has a smooth solution when $\omega$ is K\"ahler and has nonnegative orthogonal bisectional curvature and Tosatti-Weinkove have proven this result with no assumptions on $\omega$ other than being a Hermitian metric \cite{TW4,TW5}. The estimates of \cite{TW5} are crucial in the proof of the main theorem which we now summarize.

The general strategy is similar to that of the analogous results for the K\"ahler-Ricci flow \cite{TZ} (see also \cite{SWnotes})  and Chern-Ricci flow \cite{TW2}. Note that the flow \eqref{pshflow} cannot exist beyond $T$ as defined in the main theorem, so we assume that the flow has a maximal time of existence $S < T$. The $(n-1)$-plurisubharmonic flow is reduced to the parabolic scalar flow
\begin{equation}\label{ddtu}
\ddt u = \log \frac{ \left( \omegahatt + \frac{1}{n-1}((\Delta u)\omega - \ddbar u ) \right)^n}{\Omega}, \ \ \ u|_{t=0} = 0
\end{equation}
with 
\begin{equation*}
\omegahatt + \frac{1}{n-1}\left((\Delta u)\omega - \ddbar u \right)  > 0
\end{equation*}
on $[0,S)$. The maximum principle gives uniform bounds for $u$, $\dot{u}$, and the volume form $\omegatilde_t^n$ where
\begin{equation*}
\omegatilde_t =  \omegahatt + \frac{1}{n-1}\left((\Delta u)\omega - \ddbar u \right).
\end{equation*}
We then apply the maximum principle to obtain the estimate
\begin{equation*}
\tr{\omega}{\omegatilde_t} \leq C \left( \sup_{M \times [0,S)} \vert \nabla u \vert^2_g + 1 \right)
\end{equation*}
which is the parabolic version of the estimate from \cite{TW5} and the proof uses many similar elements. Following \cite{TW4}, we use a Liouville theorem and blow-up argument to uniformly bound $|\nabla u|_g^2$.  Applying the Evans-Kyrlov method (see \cite{GT,L} and \cite{G} in the complex setting for parabolic equations) gives the $C^{2+\alpha}(M,g)$ estimate and then from standard parabolic theory we produce higher order estimates. This allows us to extend the flow beyond the time $S$ contradicting the maximality of $S$.

\section{Reduction to Monge-Amp\`{e}re and notation}

We define the Christoffel symbols of the Hermitian metric $g$ in local holomorphic coordinates $(z^1, \ldots, z^n)$ by 
\begin{equation*}
\Gamma_{ij}^k = g^{k \ov l} \partial_{i} g_{j\ov{l}}
\end{equation*}
and the covariant derivative with respect to $g$ by
\begin{equation*}
\nabla_i a_l = \partial_i a_l - \Gamma_{il}^p a_p.
\end{equation*}
The torsion of $g$ is the tensor
\begin{equation*}
T_{ij}^k = \Gamma_{ij}^k - \Gamma_{ji}^k.
\end{equation*}
Note that if $g$ is a K\"ahler metric, then $T_{ij}^k = 0$. The Chern curvature of $g$ is
\begin{equation*}
{R_{k\ov l i}}^p = -\partial_{\ov l} \Gamma_{ki}^p
\end{equation*}
and it obeys the usual commutation identities for curvature. For example,
\begin{equation*}
[\nabla_i, \nabla_{\ov j}] a_l = -{R_{i\ov j l}}^pa_p, \ \ \ [\nabla_i, \nabla_{\ov j}] a_{\ov m} = {{R_{i\ov j}}^{\ov q}}_{\ov m} \ov{a_q}.
\end{equation*}
We will make use of the commutation formulas
\begin{equation*}
u_{i \ov j l} = u_{i l \ov j} - u_p {R_{l \ov j i}}^p, \ u_{p \ov j \ov m} = u_{p \ov m \ov j} - \ov{ T_{mj}^q}u_{p\ov q}, \ u_{i \ov q l} - T_{li}^p u_{p \ov q}
\end{equation*}
\begin{equation}\label{commutationrelation}
u_{i\ov{j}l\ov{m}} = u_{l\ov{m} i \ov{j}} + u_{p \ov{j}} {R_{l \ov{m} i}}^p - u_{p \ov{m}} {R_{i\ov{j} l}}^p - T_{li}^p u_{p \ov{m} \ov{j}} - \overline{T_{mj}^q} u_{l \ov{q} i} - T_{il}^p \overline{T_{mj}^q} u_{p\ov{q}}.
\end{equation}
The Chern-Ricci form $\Ric^C(\omega)$ is given by
\begin{equation*}
\Ric^C (\omega) = \sqrt{-1} R_{i\ov{j}} dz^i \wedge dz^{\ov{j}}
\end{equation*}
where 
\begin{equation*}
R_{i\ov j} = g^{k \ov l} R_{i \ov{j} k \ov{l}} = - \partial_i \partial_{\ov j} \log \det g.
\end{equation*}

A real $(n-1,n-1)$-form $\Psi$ is defined to be positive definite if for every nonzero $(1,0)$-form $\gamma$,
\begin{equation*}
\Psi \wedge \sqrt{-1} \gamma \wedge \overline{\gamma} \geq 0
\end{equation*}
with equality if and only if $\gamma = 0$. The determinant of a $\Psi$ is given by the determinant of the matrix $(\Psi_{i\ov j})$ where
\begin{align*}
\Psi & = (\sqrt{-1})^{n-1} (n-1)! \sum_{i,j} (\textrm{sgn}(i,j))\Psi_{i\ov j} dz^1 \wedge d {\ov z}^1 \wedge \ldots \wedge \hat{dz^i} \wedge dz^i \wedge \ldots \\
& \ \ \ \wedge dz^j \wedge \hat{d {\ov z}^j} \wedge \ldots \wedge dz^n \wedge d {\ov z}^n.
\end{align*}
Using this formula,
\begin{equation*}
\det \left(\omega^{n-1}\right) = \left( \det g \right)^{n-1}.
\end{equation*}

We say that a constant $C > 0$ is uniform if it only depends on the initial data for the $(n-1)$-plurisubharmonic flow. In our calculations a uniform constant $C$ may change from line to line.

Now we set up the proof of the main theorem. Suppose that $S$ is such that $0 < S < T$. Then there exists a smooth function $\psi$ such that
\begin{equation}\label{PsiS}
\Psi_S := \Phi_S + \ddbar \psi \wedge \omega^{n-2} >  0.
\end{equation}
We define $\Psi_t$ to be the straight line path from $\omega_0^{n-1}$ to $\Psi_S$ on $[0,S]$
\begin{align}\label{psitdef}
\Psi_t & = \frac{1}{S}\left( (S-t) \omega_0^{n-1} + t \left( \Phi_S + \ddbar \psi \wedge \omega^{n-2} \right)\right) \\ \nonumber
& = \omega_0^{n-1} + t \chi \wedge \omega^{n-2}
\end{align}
where $\chi = \frac{1}{S}\ddbar \psi - (n-1)\Ric^C (\omega).$ From its definition, note that $\Psi_t$ is uniformly bounded in the sense that there exists a uniform constant $C$ such that
\begin{equation}\label{psitbound}
\frac{1}{C} \omega^{n-1} \leq \Psi_t \leq C \omega^{n-1}
\end{equation} 
on $M \times [0,S]$. Define a family of Hermitian metrics $\omegahatt$ by
\begin{equation*}
\omegahatt = \frac{1}{(n-1)!} * \Psi_t = \hat{\omega}_0 + \frac{t}{n-1}\left( \left( \tr{\omega}{\chi} \right) \omega - \chi \right)
\end{equation*}
where $*$ is the Hodge star operator with respect to $g$ and
\begin{equation*}
\hat{\omega}_0 = \frac{1}{(n-1)!} * \omega_0^{n-1}.
\end{equation*}
From \eqref{psitbound} we also have
\begin{equation}\label{omegahatbound}
\frac{1}{C} \omega \leq \omegahatt \leq C \omega.
\end{equation}
on $M \times [0,S]$ for some uniform $C$. 

Suppose that $u$ satisfies \eqref{ddtu}
\begin{equation*}
\ddt u = \log \frac{ \left( \omegahatt + \frac{1}{n-1}((\Delta u)\omega - \ddbar u ) \right)^n}{\Omega}, \ \ \ u|_{t=0} = 0
\end{equation*}
with $\omegahatt + \frac{1}{n-1}((\Delta u)\omega - \ddbar u )  > 0$ and $\Omega := e^{\psi/S}\omega^n$. Note that
\begin{align}\label{ddtu2}
\ddt u & = \log \frac{ \left( \omegahatt + \frac{1}{n-1}((\Delta u)\omega - \ddbar u ) \right)^n}{\omega^n} -  \log e^{\psi/S} \\ \nonumber
& = \log \frac{\det *\left(\omegahatt + \frac{1}{n-1}((\Delta u)\omega - \ddbar u )\right)}{\det * \omega} - \frac{1}{S} \psi \\ \nonumber
& = \log \frac{\det \left( \Psi_t + \ddbar u \wedge \omega^{n-2} \right)}{\det \omega^{n-1}} - \frac{1}{S} \psi. 
\end{align}

Then if we define 
\begin{equation}\label{omegatdef}
\omega_t^{n-1} := \Psi_t + \ddbar u \wedge \omega^{n-2},
\end{equation}
equations \eqref{psitdef} and \eqref{ddtu2} show that
\begin{align}
\ddt \omega_t^{n-1} & = \chi \wedge \omega^{n-2} + \ddbar \ddt u \wedge \omega^{n-2} \\
&= -(n-1) \Ric^C(\omega_t) \wedge \omega^{n-2}. \nonumber
\end{align}

Conversely, suppose that $\omega_t^{n-1}$ as defined in \eqref{omegatdef} satisfies \eqref{pshflow}, then
\begin{align*}
\ddbar \left( \ddt u \right) \wedge \omega^{n-2} & = \ddt \left( \omega_t^{n-1} - \Psi_t \right) \\
& = \left( \ddbar \log \frac{\det \omega_t^{n-1}}{\omega^{n-1}} - \frac{1}{S}\ddbar \psi\right) \wedge \omega^{n-2}. 
\end{align*}
Using the equalities in \eqref{ddtu2}, we see that $\omega_t^{n-1}$ satisfies \eqref{pshflow} if and only if $u$ satisfies \eqref{ddtu}. 

We define the Hermitian metric $\omegatilde$ by 
\begin{equation}\label{omegatildedef}
\omegatilde_t :=  \omegahatt + \frac{1}{n-1}((\Delta u)\omega - \ddbar u  ).
\end{equation}
To simplify notation we drop the $t$ subscripts on the metrics and use $\omegatilde$ and $\omegahat$ to denote $\omegatilde_t$ and $\omegahatt$. However, $\omega$ will still denote the fixed Hermitian metric $\omega$ and we will not refer to the family of metrics $\omega_t$ solving \eqref{pshflow} for the remainder of this paper.

\section{Preliminary estimates}

We prove uniform bounds for $u$, $\dot{u}$, and the volume form $\omegatilde^n$. The estimate for $u$ is actually simpler than in the elliptic case \cite{TW4,TW5} since we can apply the parabolic maximum principle to \eqref{ddtu}.

\begin{lemma}\label{0est}
Suppose $u$ satisfies \eqref{ddtu} on $M \times [0,S)$. Then there exists a uniform $C > 0$ such that
\begin{enumerate}
\item $|u| \leq C $ 
\item $|\dot{u}| \leq C$ 
\item $\frac{1}{C} \Omega \leq \omegatilde^n \leq C \Omega$ 
\end{enumerate}
on $M \times [0,S)$.
\end{lemma}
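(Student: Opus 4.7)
My plan is to apply the parabolic maximum principle, with the key second-order operator
\[
Lv := \frac{1}{n-1}\bigl((\tr{\omegatilde}{\omega})\Delta v - \gtilde^{i\ov{j}} v_{i\ov{j}}\bigr),
\]
which is the linearization of the right hand side of \eqref{ddtu}. In a frame diagonalizing $\gtilde^{i\ov{j}}$ with eigenvalues $\mu_1,\ldots,\mu_n$, a short computation gives $Lv = \frac{1}{n-1}\sum_j v_{j\ov{j}}(\mu - \mu_j)$ with $\mu := \sum_i \mu_i$; since $\mu - \mu_j > 0$, $L$ is elliptic, with $Lv \leq 0$ at a spatial max of $v$ and $Lv \geq 0$ at a spatial min. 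Taking $\tr{\omegatilde}{\cdot}$ of $\omegatilde - \omegahat = \frac{1}{n-1}((\Delta u)\omega - \ddbar u)$ yields $Lu = n - \tr{\omegatilde}{\omegahat}$, and differentiating \eqref{ddtu} in $t$ gives $\ddot u - L\dot u = \tr{\omegatilde}{\dot{\omegahat}}$, where $\dot{\omegahat} = \frac{1}{n-1}\bigl((\tr{\omega}{\chi})\omega - \chi\bigr)$ is a time-independent, uniformly bounded $(1,1)$-form.

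For part (1) I argue directly. At a spatial max of $u$ the matrix $(u_{i\ov{j}})$ is negative semidefinite with eigenvalues $\lambda_j \leq 0$, so the eigenvalues of $(\Delta u)\omega - \ddbar u$ are $\sum_{k\neq j}\lambda_k \leq 0$; hence $\omegatilde \leq \omegahat \leq C\omega$ by \eqref{omegahatbound}, and $\dot u = \log(\omegatilde^n/\Omega) \leq C$ at such a point, which controls $u_{\max}(t)$ after integrating in $t$. The symmetric argument at a spatial minimum controls $u_{\min}(t)$. For the upper bound in (2), set $Q := \dot u - Au$ with $A$ chosen large enough (using \eqref{omegahatbound} and the boundedness of $\dot{\omegahat}$) that $\dot{\omegahat} - A\omegahat \leq -\omega$ as matrices. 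Then
\[
(\ddt - L)Q = \tr{\omegatilde}{(\dot{\omegahat} - A\omegahat)} - A\dot u + An \leq -\tr{\omegatilde}{\omega} - A\dot u + An,
\]
so at an interior spacetime max of $Q$, $\dot u \leq n$; combined with (1), this yields a uniform upper bound $\dot u \leq C$.

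The main obstacle is the lower bound for $\dot u$. I would set $H := \dot u + Au$ with $A$ large enough that $\dot{\omegahat} + A\omegahat \geq \omega$. Then
\[
(\ddt - L)H \geq \tr{\omegatilde}{\omega} + A\dot u - An,
\]
and the arithmetic--geometric mean inequality applied to the eigenvalues of $\gtilde^{-1}$ against $g$ gives $\tr{\omegatilde}{\omega} \geq n(\omega^n/\omegatilde^n)^{1/n} \geq c_0 e^{-\dot u/n}$ for a uniform $c_0 > 0$, using $\omegatilde^n = e^{\dot u}\Omega$ and the boundedness of $\Omega/\omega^n$. At an interior spacetime min of $H$, $(\ddt - L)H \leq 0$ forces $c_0 e^{-\dot u/n} + A\dot u - An \leq 0$; since the exponential term dominates the linear one as $\dot u \to -\infty$, $\dot u$ must exceed a uniform constant at such a min (if instead the min occurs at $t=0$, $\dot u(\cdot,0) = \log(\omegahat_0^n/\Omega|_{t=0})$ is bounded for free). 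Combined with (1), this yields the lower bound in (2), and (3) is then immediate from $\omegatilde^n = e^{\dot u}\Omega$ together with the uniform bound on $\psi/S$.
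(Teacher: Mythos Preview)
Your argument is correct. Parts (1), (3), and the upper bound in (2) are essentially the paper's proof: the paper also reduces (1) to the observation that $(\Delta u)\omega - \ddbar u \le 0$ at a spatial maximum (stated there as a separate lemma), and uses the test function $(n-1)\dot u - Au$ together with $Lu = n - \tr{\omegatilde}{\omegahat}$ for the upper bound on $\dot u$.

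The genuine difference is in the lower bound for $\dot u$. The paper uses the time-weighted quantity
\[
Q = (n-1)(S - t + \varepsilon)\dot u + u + nt,
\]
for which the identity $\omegahat + (S-t)\dot{\omegahat} = \omegahat_S$ collapses $(\ddt - L)Q$ to $\tr{\omegatilde}{\bigl(\omegahat_S + \varepsilon((\tr{\omega}{\chi})\omega - \chi)\bigr)}$, strictly positive because $\omegahat_S$ is a genuine metric; hence $Q$ attains its minimum at $t=0$. Your approach instead uses $H = \dot u + Au$ and the arithmetic--geometric inequality $\tr{\omegatilde}{\omega} \ge c_0 e^{-\dot u/n}$ to force a transcendental inequality $c_0 e^{-\dot u/n} + A\dot u - An \le 0$ at an interior minimum. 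Both routes ultimately draw on the same hypothesis, the uniform lower bound \eqref{omegahatbound} on $\omegahat_t$ (equivalently $\omegahat_S > 0$). The paper's choice is tailored to the linear-in-$t$ structure of $\omegahat_t$ and gives a clean strict inequality with no constant to unwind; your AM--GM argument is the more portable Monge--Amp\`ere trick and would work verbatim for any reference path with $\omegahat_t$ uniformly bounded below, at the cost of inverting an exponential inequality.
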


To prove this, we need a maximum principle that will work in this context.
\begin{lemma}
Let $v$ be a smooth real-valued function on a compact complex manifold $M$ with Hermitian metric $\omega$. Then at a point $x_0$ where $v$ achieves a maximum, 
\begin{equation*}
\left( \Delta v \right) \omega - \ddbar v \leq 0.
\end{equation*} 
\end{lemma}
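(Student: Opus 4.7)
The plan is a direct diagonalization argument. At the maximum point $x_0$, the real Hessian of $v$ is negative semi-definite, and hence so is its restriction $(v_{i\ov j})$ to the complex directions; equivalently, $\ddbar v(x_0) \leq 0$ as a $(1,1)$-form.

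First I would choose local holomorphic coordinates at $x_0$ such that $g_{i\ov j}(x_0) = \delta_{ij}$ and the complex Hessian $(v_{i\ov j}(x_0))$ is diagonal with eigenvalues $\lambda_1, \ldots, \lambda_n$. By the maximum property, $\lambda_k \leq 0$ for each $k$, and
\begin{equation*}
\Delta v(x_0) \;=\; g^{i\ov j} v_{i\ov j}(x_0) \;=\; \sum_{k=1}^n \lambda_k \;\leq\; 0.
\end{equation*}

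In these coordinates, the matrix representing the $(1,1)$-form $(\Delta v)\omega - \ddbar v$ at $x_0$ is the diagonal matrix with entries
\begin{equation*}
\Delta v - \lambda_k \;=\; \sum_{j=1}^n \lambda_j - \lambda_k \;=\; \sum_{j \neq k} \lambda_j \;\leq\; 0,
\end{equation*}
since every $\lambda_j \leq 0$. Thus every eigenvalue is non-positive, which (being a property of the form, not the coordinates) gives $(\Delta v)\omega - \ddbar v \leq 0$ at $x_0$, as desired.

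There is no real obstacle here; the only subtlety worth flagging is the familiar one that the inequality $\ddbar v(x_0) \leq 0$ uses only the \emph{complex} Hessian and so follows from the weaker fact that $v$ restricted to any complex line through $x_0$ attains a local maximum, not the full negative semi-definiteness of the real Hessian. The simultaneous diagonalization of $g$ and $v_{i\ov j}$ is standard at a single point and reduces the statement to the trivial arithmetic observation above.
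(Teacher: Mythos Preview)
Your proof is correct and is essentially identical to the paper's own argument: both diagonalize $g$ and the complex Hessian of $v$ simultaneously at $x_0$, use that the eigenvalues $\lambda_k \leq 0$ at a maximum, and observe that the diagonal entries of $(\Delta v)\omega - \ddbar v$ are $\sum_{j\neq k}\lambda_j \leq 0$. The only cosmetic difference is that you write out the eigenvalue $\sum_{j\neq k}\lambda_j$ explicitly, whereas the paper phrases it as the matrix inequality $(\sum_j \lambda_j)\delta_{i\ov j} \leq \lambda_i \delta_{i\ov j}$.
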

\begin{proof}
Choose coordinates at $x_0$ so that $g_{i \ov{j}} = \delta_{i \ov{j}}$ and $v_{i\ov{j}} := \partial_i \partial_{\ov{j}} v = \lambda_i \delta_{i\ov{j}}.$ Since $x_0$ is where $v$ attains a maximum $\lambda_i \leq 0$ for all $i = 0, \ldots, n$. Then at $x_0$,
\begin{equation*}
\left(\Delta v\right)g_{i\ov{j}} = \left(\sum_{i=1}^n \lambda_i \right) \delta_{i\ov{j}} \leq \lambda_i \delta_{i\ov{j}} = v_{i\ov{j}}.
\end{equation*} 
\end{proof}

We will also make use of the tensor
\begin{equation*}
\Theta^{i\ov{j}} = \frac{1}{n-1} \left( (\tr{\gtilde}{g}) g^{i\ov{j}} - \gtilde^{i\ov{j}} \right) > 0
\end{equation*}
and the operator $L$ acting on smooth functions $v$ on $M$ defined by
\begin{equation*}
Lv = \Theta^{i\ov{j}} \partial_i \partial_{\ov{j}} v.
\end{equation*}
Taking trace of \eqref{omegatildedef}, we have the useful relation
\begin{equation}\label{lu}
n = \tr{\omegatilde}{\omegahat} + Lu.
\end{equation}
Using this, we can prove Lemma \ref{0est} via maximum principle similar to the analogous estimates for the K\"ahler-Ricci flow (see \cite{SWnotes} for example).

\begin{proof}
For $(1)$, define a quantity $Q = u - At$ where $A$ is a constant to be determined later and fix $0 < t' < S$. Then suppose that a maximum of $Q$ on $M \times [0,t']$ occurs at a point $(x_0, t_0)$ with $t_0 > 0$. Applying the previous lemma and the usual maximum principle at $(x_0,t_0)$,
\begin{align*}
0 & \leq \ddt Q \\
& = \log \frac{ \left( \omegahat + \frac{1}{n-1}((\Delta u)\omega - \ddbar u ) \right)^n}{\Omega} - A \\
& \leq \log \frac{\omegahat^n}{\Omega} - A \\
& \leq C - A
\end{align*}
where on the last line we used \eqref{omegahatbound}. Choosing $A = C+1$, we get a contradiction. Since $t'$ is arbitrary, we conclude that $Q$ achieves its maximum at $t_0 = 0$ and so we have a uniform upper bound for $u$. The lower bound follows similarly.

For $(2)$, we compute the evolution equation for $\dot{u}$. Using \eqref{ddtu},
\begin{equation}\label{udotdot}
\ddt \dot{u} = \tr{\omegatilde}{\left( \ddt \omegatilde \right)} = \frac{1}{n-1}\tr{\omegatilde}{\left( (\tr{\omega}{\chi}) \omega - \chi + (\Delta \dot{u}) \omega - \ddbar \dot{u}  \right)}
\end{equation}
Then we have
\begin{align}\label{ludot}
L\dot{u} &= \frac{1}{n-1} \left( (\tr{\gtilde}{g}) g^{i\ov{j}} - \gtilde^{i\ov{j}} \right) \partial_i \partial_{\ov{j}} \dot{u} \\ \nonumber
&= \frac{1}{n-1} \left( (\Delta \dot{u})(\tr{\omegatilde}{\omega}) - \tr{\omegatilde}{\ddbar \dot{u}}\right).
\end{align}
Now consider the quantity $Q = (n-1)\dot{u} - Au$ where $A$ is a constant to be determined. Combining \eqref{lu}, \eqref{udotdot}, and \eqref{ludot},
\begin{equation*}
\left( \ddt - L \right) Q = (\tr{\omegatilde}{\omega})(\tr{\omega}{\chi}) - \tr{\omegatilde}{\chi} - A \dot{u} + An - A \tr{\omegatilde}{\omegahat}.
\end{equation*}
Using \ref{omegahatbound}, we can choose $A$ large enough so that
\begin{equation*}
A \omegahat \geq (\tr{\omega}{\chi}) \omega - \chi
\end{equation*}
which gives
\begin{equation*}
\left( \ddt - L \right) Q \leq -A \dot{u} + An.
\end{equation*}
Hence at a point $(x_0, t_0)$ at which $Q$ achieves a maximum, $\dot{u}(x_0,t_0) \leq n$. Then since $Q$ is bounded above by its value at $(x_0,t_0)$,
\begin{equation*}
\dot{u} \leq \frac{1}{n-1}\left( A\sup_{M\times [0,S)} u + n(n-1) -Au(x_0,t_0) \right) \leq C
\end{equation*}
where for the last inequality we used the above uniform bound for $u$. 

To prove the lower bound, consider the quantity 
\begin{equation*}
Q = (n-1)(S - t + \varepsilon)\dot{u} + u + nt
\end{equation*} 
where $\epsilon > 0$ is a constant to be determined. Again applying \eqref{lu}, \eqref{udotdot}, and \eqref{ludot},
\begin{align*}
\left(\ddt - L \right) Q &= - \dot{u} + (S - t + \varepsilon)\big( (\tr{\omegatilde}{\omega})(\tr{\omega}{\chi}) - \tr{\omegatilde}{\chi} \big) + \dot{u} - n + \tr{\omegatilde}{\omegahat} + n \\
&=\tr{\omegatilde}{\big( \omegahat_S + \varepsilon ( (\tr{\omega}{\chi}) \omega - \chi) \big)} \\
&> 0
\end{align*} 
provided we choose $\varepsilon > 0$ small enough. If $Q$ achieves a minimum at a point $(x_0,t_0)$ with $t_0 > 0$, we have a contradiction. Hence $Q$ must be bounded from below by its infimum over $M$ at time $t = 0$. When combined with the uniform bound for $u$, this gives the lower bound for $\dot{u}$.

To finish the lemma, (3) follows immediately from (2) since we have 
\begin{equation*}
\dot{u} = \log \frac{\omegatilde^n}{\Omega}.
\end{equation*}
\end{proof}

\section{Second order estimate}

We obtain a second order estimate for $u$ in terms of $\tr{\omega}{\omegatilde}$. This estimate is the parabolic version of the estimates from  Hou-Ma-Wu \cite{HMW} and Tosatti-Weinkove \cite{TW4,TW5} and the proof follows a similar method.
\begin{lemma}\label{2est}
There exists a uniform $C > 0$ such that
\begin{equation}\label{tracebound}
\tr{\omega}{\omegatilde} \leq C \left( \sup_{M \times [0,S)} \vert \nabla u \vert^2_g + 1 \right)
\end{equation}
on $M \times [0,S)$.
\end{lemma}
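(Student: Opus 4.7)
The plan is to adapt the Hou--Ma--Wu \cite{HMW} / Tosatti--Weinkove \cite{TW4, TW5} second-order estimate for the elliptic $(n-1)$-plurisubharmonic equation to the parabolic flow \eqref{ddtu}. I apply the parabolic maximum principle, using the operator $\ddt - L$, to the Hou--Ma--Wu-type quantity
\begin{equation*}
Q = \log \lambda_1 + h(|\nabla u|^2_g) - A u,
\end{equation*}
where $\lambda_1(x,t)$ is the largest eigenvalue of $\gtilde_{i\ov j}$ with respect to $g_{i\ov j}$, $A > 0$ is a large uniform constant to be chosen, and $h(s) = -\tfrac{1}{2}\log\bigl(1 - s/(2K)\bigr)$ with $K := 1 + \sup_{M \times [0,S)} |\nabla u|^2_g$, so that $h'' = 2(h')^2$ and both $h$ and $h'\cdot K$ are uniformly controlled on $[0,K]$. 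Non-smoothness of $\lambda_1$ at points of eigenvalue collision is handled by the standard perturbation trick at the maximum.

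Fix $0 < t' < S$ and suppose $Q$ attains its maximum on $M \times [0, t']$ at $(x_0, t_0)$ with $t_0 > 0$, choosing coordinates at $x_0$ so that $g_{i\ov j} = \delta_{ij}$ and $\gtilde_{i\ov j}$ is diagonal. Computing $(\ddt - L)\log \lambda_1$ along the flow, using the evolution of $\gtilde$ derived from \eqref{ddtu} and \eqref{omegatildedef}, the derivative formula for a simple eigenvalue, and the commutation identities \eqref{commutationrelation}, yields a Bochner-type inequality
\begin{equation*}
(\ddt - L)\log \lambda_1 \leq \frac{E(u)}{\lambda_1} - \frac{\Theta^{i\ov j}\,\partial_i \gtilde_{1\ov 1}\,\partial_{\ov j}\gtilde_{1\ov 1}}{\lambda_1^2} + C\,\tr{\omegatilde}{\omega},
\end{equation*}
where $E(u)$ collects the unsigned third-order terms arising from the torsion of $\omega$ contracted against $\nabla \gtilde$. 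Meanwhile, $(\ddt - L)h(|\nabla u|^2_g)$ produces the positive quadratic term $h''(|\nabla u|^2)\,\Theta^{i\ov j}\partial_i|\nabla u|^2\,\partial_{\ov j}|\nabla u|^2$ plus controlled remainders from the Bochner formula for $|\nabla u|^2$, and $(\ddt - L)(-A u)$, via \eqref{ddtu}, \eqref{lu}, Lemma \ref{0est}(2), and \eqref{omegahatbound}, contributes the favourable $-c_0 A\,\tr{\omegatilde}{\omega} + CA$ for some uniform $c_0 > 0$.

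The crux -- and principal obstacle -- is showing that at $(x_0,t_0)$ the dangerous term $E(u)/\lambda_1$ is absorbed by the Kato-type gradient term together with the $h''$-contribution, leaving at most a bounded multiple of $\tr{\omegatilde}{\omega}$. This is precisely the absorption identity behind the elliptic argument of \cite{TW5}; in the parabolic setting, the critical-point conditions $\partial Q = 0$ together with the $C^0$ bounds from Lemma \ref{0est} dispose of the additional time-derivative terms. Once the absorption is verified, combining the three inequalities above with the lower bound $\tr{\omegatilde}{\omega} \geq c\,\lambda_1^{1/(n-1)}$ -- which follows from the two-sided control of $\det\gtilde$ in Lemma \ref{0est}(3) and the arithmetic-geometric mean -- forces $\lambda_1(x_0, t_0) \leq C(K+1)$ once $A$ is chosen sufficiently large. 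Since $h$ and $u$ are uniformly bounded, the bound on $Q$ at its maximum propagates to $\lambda_1 \leq C(\sup|\nabla u|^2_g + 1)$ on all of $M \times [0, t')$; hence $\tr{\omega}{\omegatilde} \leq n\lambda_1 \leq C(\sup|\nabla u|^2_g + 1)$, and letting $t' \nearrow S$ completes the proof.
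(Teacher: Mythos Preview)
Your overall strategy---adapt the Hou--Ma--Wu / Tosatti--Weinkove second-order estimate to the parabolic setting via $\ddt - L$---is exactly what the paper does. However, the specific test quantity you propose differs from the paper's in two ways that matter, and because you defer the ``absorption'' step to \cite{TW5} while omitting the very ingredients that make that absorption work, the argument as written has a genuine gap.

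First, the paper (following \cite{TW5}) does not work with $\log\lambda_1$ alone but with
\[
\log(\eta_{i\ov j}\xi^i\ov{\xi^j}) + c\log\bigl(g^{p\ov q}\eta_{i\ov q}\eta_{p\ov j}\xi^i\ov{\xi^j}\bigr) + \varphi(|\nabla u|_g^2) + \nu(u),
\]
where $\eta_{i\ov j} = (\tr{g}{\gtilde})g_{i\ov j} - (n-1)\gtilde_{i\ov j}$ and $c>0$ is a small constant. The additional $c\log$ term is not cosmetic: after applying $L$, it produces the good third-order quantities $\tfrac{c}{2}\sum_i\sum_{p\neq 1}\Theta^{i\ov i}|\eta_{1\ov p i}|^2/\eta_{1\ov 1}^2$ (and the analogous $\eta_{p\ov 1 i}$ sum). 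These are exactly the terms used to absorb the torsion contributions $\Theta^{i\ov i}\ov{T_{1i}^p}\,u_{p\ov 1 i}/\eta_{1\ov 1}$ with $p\neq 1$ that arise from commuting covariant derivatives when $\omega$ is merely Hermitian; see \eqref{torsion1}--\eqref{torsion3}. Your Kato term only controls $|\eta_{1\ov 1 i}|^2/\eta_{1\ov 1}^2$, and the $h''$ term only involves second (not third) derivatives of $u$, so neither can absorb the off-diagonal torsion pieces. Thus the step ``$E(u)/\lambda_1$ is absorbed by the Kato-type gradient term together with the $h''$-contribution'' does not go through for general Hermitian $\omega$ with the quantity you wrote down.

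Second, the paper uses $\nu(u) = -A\log(1+u/(2L))$ rather than the linear $-Au$. The point of the logarithmic choice is the inequality $\nu'' \geq \tfrac{2\varepsilon}{1-\varepsilon}(\nu')^2$ in \eqref{nubounds}, which in the HMW/TW case analysis allows the bad term $(1+2c)\sum_i\Theta^{i\ov i}|\eta_{1\ov 1 i}|^2/\eta_{1\ov 1}^2$ to be controlled via the critical-point identity $\partial_i Q = 0$ together with the good terms $-\varphi''\Theta^{i\ov i}|\cdots|^2$ and $-\nu''\Theta^{i\ov i}|u_i|^2$. With $\nu'' = 0$ this mechanism is unavailable. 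Your proposed endgame---use $-c_0 A\,\tr{\omegatilde}{\omega}$ together with $\tr{\omegatilde}{\omega}\geq c\lambda_1^{1/(n-1)}$---would, if the absorption were clean, yield $\lambda_1\leq C$ with \emph{no} dependence on $K$, which is stronger than what is actually true and would render Lemma~\ref{1est} unnecessary; this is a sign that the absorption cannot be as clean as you assume.

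In short: you invoke ``the absorption identity behind the elliptic argument of \cite{TW5}'', but that identity relies precisely on the $c\log$ term and the logarithmic $\nu$ that your test function drops. Restoring those two features (as the paper does) repairs the argument.
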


\begin{proof}
As in \cite{TW5} we consider the tensor
\begin{equation}\label{etaij}
\eta_{i\ov{j}} = u_{i\ov{j}} + (\tr{g}{\ghat}) g_{i\ov{j}} - (n-1) \ghat_{i\ov{j}} = (\tr{g}{\gtilde}) g_{i\ov{j}} - (n-1)\gtilde_{i\ov{j}}.
\end{equation}
Fix a $t'$ such that $0 < t' < S$. Define the quantity
\begin{equation*}
H(x,\xi,t) = \log( \eta_{i\ov{j}} \xi^i \overline{\xi^{j}}) + c \log \left( g^{p\ov{q}} \eta_{i\ov{q}} \eta_{p\ov{j}} \xi^i \overline{\xi^j} \right) + \varphi \left( | \nabla u |_g^2 \right) + \nu(u)
\end{equation*}
for $x \in M, \xi \in T^{1,0}_x M$ a $g$-unit vector, $t \in [0,t']$, and $c > 0$ a small constant to be determined.  The above functions are
\begin{align*}
\varphi(s) &= -\frac{1}{2} \log \left( 1 - \frac{s}{2K} \right), \ \ \ 0 \leq s \leq K - 1 \\
\nu(s) &= -A \log \left( 1 + \frac{s}{2L}\right), \ \ \ -L + 1 \leq s \leq L - 1,
\end{align*} 
where
\begin{equation*}
K = \sup_{M \times [0,t']} | \nabla u |_g^2 + 1, L = \sup_{M \times [0,S)} |u| + 1, A = 3L(C_1 + 1)
\end{equation*}
with $C_1$ a uniform constant to be determined during the proof. Note that $L$ is uniformly bounded by Lemma \ref{0est}. This setup is similar to \cite{HMW, TW4, TW5}, the difference being that we have a time dependence. Evaluating at $|\nabla u|_2^2$, we have the bounds
\begin{equation}\label{phibounds}
0 \leq \varphi \leq C, \ \ \ 0 < \frac{1}{4K} \leq \varphi ' \leq \frac{1}{2K}, \ \ \ \varphi '' = 2 ( \varphi ' )^2 > 0
\end{equation}
and evaluating at $u$,
\begin{equation}\label{nubounds}
| \nu | \leq C, \ \ \ C_1 + 1 = \frac{A}{3L} \leq - \nu ' \leq \frac{A}{L}, \ \ \ \frac{2\varepsilon}{1-\varepsilon} (\nu ' )^2 \leq \nu '', \textrm{ for all } \varepsilon \leq \frac{1}{2A+1}
\end{equation}
on $M \times [0,t']$ for uniform $C > 0$.

Similar to \cite{HMW}, we define the set
\begin{equation*}
W = \left \{ (x,\xi,t) \ \big| \ \eta(x,t)_{i\ov{j}} \xi^i \overline{\xi^{j}} \geq 0, \xi \in T^{1,0}_x M \textrm{ a }g\textrm{-unit vector}, t \in [0,t'] \right \}.  
\end{equation*}
Then $W$ is compact, $H = -\infty$ on the boundary of a cross section $W_t$ for fixed time $t$, and $H$ is upper semi-continuous on $W_t$. Thus if $H$ has a maximum at a point $(x_0, \xi_0, t_0)$ in $W$,  $(x_0,\xi_0)$ is in the interior of $W_{t_0}$. We assume without loss of generality that $t_0 > 0$. 

Choose holomorphic coordinates $(z^1, \ldots, z^n)$ centered at $x_0$ such that at $(x_0,t_0)$
\begin{equation*}
g_{i\ov{j}} = \delta_{i\ov{j}}, \ \ \ \eta_{i\ov{j}} = \delta_{i\ov{j}} \eta_{i\ov{i}}, \ \ \ \eta_{1\ov{1}} \geq \eta_{2 \ov{2}} \geq \ldots \geq \eta_{n \ov{n}}.
\end{equation*}
From the definition of $\eta_{i\ov{j}}$
\begin{equation*}
\gtilde_{i\ov{j}} = \frac{1}{n-1} \left ( -\eta_{i\ov{j}} + (\tr{g}{\gtilde})g_{i\ov{j}} \right )
\end{equation*}
so that $\gtilde_{i\ov{j}}$ is also diagonal at $(x_0,t_0)$ and we may define $\lambda_i$ by
\begin{equation*}
\gtilde_{i\ov{j}} = \lambda_i \delta_{i\ov{j}}. 
\end{equation*}
at $(x_0,t_0)$. Using \eqref{etaij},
\begin{equation}\label{eiganvalsum}
\eta_{i\ov{i}} = \sum_{j = 1}^n \lambda_j - (n-1) \lambda_i
\end{equation}
which gives 
\begin{equation*}
0 < \lambda_1 \leq \ldots \leq \lambda_n
\end{equation*}
and
\begin{equation}\label{bigunifequiv}
\frac{1}{n} \tr{\omega}{\omegatilde} \leq \lambda_n \leq \eta_{1\ov{1}} \leq (n-1) \lambda_n \leq (n-1) \tr{\omega}{\omegatilde}.
\end{equation}

Following \cite{TW5}, choosing $c < 1/(n-3)$ when $n > 3$ or $c$ any positive real number when $n = 3$, the quantity 
\begin{equation*}
 \log( \eta_{i\ov{j}} \xi^i \overline{\xi^{j}}) + c \log \left( g^{p\ov{q}} \eta_{i\ov{q}} \eta_{p\ov{j}} \xi^i \overline{\xi^j} \right) 
\end{equation*}
is maximized at $(x_0,t_0)$ by $\xi_0 = \partial / \partial z^1$ since $\eta_{1\ov{1}}$ is the largest eigenvalue of $\eta_{i\ov{j}}$. We extend $\xi_0$ over our coordinate patch to the unit vector field 
\begin{equation*}
\xi_0 = g^{-1/2}_{1\ov{1}} \frac{\partial}{\partial z^1}.
\end{equation*}
Now we consider the quantity
\begin{equation}\label{secondQ}
Q(x,t) = H(x,\xi_0,t) = \log \left( g^{-1}_{1\ov{1}} \eta_{1\ov{1}} \right) + \varphi \left( | \nabla u |_g^2 \right) + \nu(u)
\end{equation}
defined in a neighborhood of $(x_0,t_0)$ chosen small enough so that $Q$ attains its maximum at $(x_0,t_0)$. The proof of the estimate follows from applying the maximum principle to this quantity to obtain the bound
\begin{equation}\label{etaleqK}
\eta_{1\ov{1}}(x_0,t_0) \leq C K = C\left( \sup_{M \times [0,t']} |\nabla u|_g^2 + 1 \right).
\end{equation}
which will complete the proof: at any point $(x,t) \in M \times [0,t']$ using \eqref{bigunifequiv},
\begin{align}\label{cpctinterval2est}
\tr{\omega}{\omegatilde}(x,t) & \leq n \eta_{1\ov 1}(x,t) \\  \nonumber
& \leq n \sup_W \left( (\eta_{i\ov j} \xi^i \ov{\xi^j})^{1/(1+2c)}\left( g^{p\ov q}  \eta_{i\ov{q}} \eta_{p\ov{j}} \xi^i \overline{\xi^j} \right)^{c/(1+2c)} \right) \\ \nonumber
& \leq Ce^{Q(x_0,t_0)} \\ \nonumber
& \leq C \left( \sup_{M \times [0,t']} |\nabla u|_g^2 + 1 \right).
\end{align}
Since $C > 0$ is uniform we get the desired estimate \eqref{tracebound}.

We begin the proof of the estimate \eqref{etaleqK}. First, we collect some useful facts. At the point $(x_0,t_0)$,
\begin{equation}\label{sumThetaii}
\sum_{i} \Theta^{i\ov{i}} = \tr{\gtilde}{g}
\end{equation}
and we may assume that at this point
\begin{equation}\label{uijbound2eta}
|u_{i\ov{j}}| \leq 2 |\eta_{1\ov{1}} |
\end{equation}
since our goal is to prove a uniform bound for $\eta_{1\ov 1}(x_0,t_0)$. As in \cite{TW5} we have at $(x_0,t_0)$
\begin{align}\label{LofQbound}
L(Q) \geq & (1+2c) \sum_i \frac{\Theta^{i\ov{i}} \eta_{1\ov{1} i \ov{i}}}{\eta_{1\ov{1}}} + \frac{c}{2} \sum_i \sum_{p \neq 1} \frac{\Theta^{i\ov{i}} |\eta_{p \ov{1} i} |^2}{(\eta_{1\ov{1}})^2} + \frac{c}{2} \sum_i \sum_{p \neq 1} \frac{\Theta^{i\ov{i}} |\eta_{1 \ov{p} i} |^2}{(\eta_{1\ov{1}})^2} \\ \nonumber
& \ - (1 + 2c) \sum_i \frac{ \Theta^{i\ov{i}} | \eta_{1 \ov{1} i } |^2 }{(\eta_{1\ov{1}})^2} + \nu ' \sum_i \Theta^{i\ov{i}} u_{i\ov{i}} + \nu '' \sum_i \Theta^{i\ov{i}} | u_i |^2 \\ \nonumber
& \ + \varphi '' \sum_{i} \Theta^{i\ov{i}} \left \vert \sum_{p} u_p u_{\ov{p}i} + \sum_p u_{p i} u_{\ov{p}} \right \vert^2 + \varphi ' \sum_{i,p} \Theta^{i\ov{i}} \left( | u_{p \ov{i}} |^2 + | u_{p i} |^2 \right) \\ \nonumber
& \ + \varphi ' \sum_{i,p} \Theta^{i\ov{i}} \left( u_{p i \ov{i}} u_{\ov{p}} + u_{\ov{p} i \ov{i}} u_p \right) - C \tr{\gtilde}{g}
\end{align}
for a uniform $C > 0$ where the subscripts denote covariant derivatives with respect to the fixed Hermitian metric $g$.

Computing the time evolution of $Q$ at $(x_0,t_0)$,
\begin{equation}\label{timeEvoQ}
\ddt Q = (1+2c) \frac{ \dot{\eta}_{1\ov{1}}}{\eta_{1\ov{1}}} + \varphi' \left( \sum_p \dot{u}_p u_{\ov{p}} + \sum_{p} \dot{u}_{\ov{p}} u_p \right) + \nu ' \dot{u}.
\end{equation}
Using the definition of $\eta_{i\ov{j}}$ \eqref{etaij},
\begin{align*}
\dot{\eta}_{i\ov{j}} &= \dot{u}_{i\ov{j}} + \left( \tr{g}{\ddt \ghat} \right) g_{i\ov{j}} - (n-1) \ddt \ghat_{i\ov{j}} \\ \nonumber
&= \dot{u}_{i\ov{j}} + (\tr{g}{\chi})g_{i \ov j} - \left( (\tr{g}{\chi}) g_{i\ov{j}} - \chi_{i\ov{j}} \right).
\end{align*}
Evaluating at $(x_0,t_0)$,
\begin{equation}\label{etadot11}
\dot{\eta}_{1\ov{1}} = \dot{u}_{1\ov{1}} + \chi_{1\ov{1}}.
\end{equation}
Covariantly differentiating the flow \eqref{ddtu} with respect to $g$,
\begin{equation}\label{covdiffudot}
\dot{u}_l =  \gtilde^{i\ov{j}} \nabla_l \gtilde_{i\ov{j}} - \frac{1}{S} \psi_l 
\end{equation}
and
\begin{equation}\label{covdiffudot2}
\dot{u}_{l \ov{m}} = \gtilde^{i\ov{j}} \nabla_{\ov{m}} \nabla_{l} \gtilde_{i\ov{j}} - \gtilde^{i \ov{q}} \gtilde^{p \ov{j}} \nabla_{\ov{m}} \gtilde_{p \ov{q}} \nabla_l \gtilde_{i\ov{j}} - \frac{1}{S} \psi_{l \ov{m}}.
\end{equation} 
Using the definition of $\gtilde$ \eqref{omegatildedef},
\begin{equation*}
\dot{u}_l =  \Theta^{i\ov{j}} u_{i \ov{j} l} + \gtilde^{i\ov{j}} \nabla_l \ghat_{i\ov{j}} - \frac{1}{S} \psi_l
\end{equation*}
and letting $\hat{h}_{i\ov{j}} = (n-1) \ghat_{i\ov{j}}$,
\begin{align*}
\dot{u}_{l\ov{m}} = & \Theta^{i\ov{j}} u_{i\ov{j} l \ov{m}} + \gtilde^{i\ov{j}} \nabla_{\ov{m}} \nabla_l \ghat_{i\ov{j}} - \frac{1}{S} \psi_{l\ov{m}} \\
& \ - \frac{\gtilde^{i\ov{q}} \gtilde^{p\ov{j}} \left( g_{p\ov{q}} g^{r\ov{s}} u_{r \ov{s} \ov{m}} - u_{p \ov{q} \ov{m}} + \nabla_{\ov{m}} \hat{h}_{p\ov{q}}\right) \left( g_{i\ov{j}} g^{r\ov{s}} u_{r \ov{s} l} - u_{i \ov{j} l} + \nabla_l \hat{h}_{i\ov{j}}\right)}{(n-1)^2}.
\end{align*}
At $(x_0,t_0)$, these become
\begin{equation}\label{dotup}
\dot{u}_p = \sum_i \Theta^{i\ov{i}} u_{i \ov{i} p} + \sum_{i} \gtilde^{i\ov{i}} \ghat_{i\ov{i}p} - \frac{1}{S} \psi_p
\end{equation}
and
\begin{align}\label{dotu11}
\dot{u}_{1\ov{1}} = & \sum_{i} \Theta^{i\ov{i}} u_{i\ov{i} 1 \ov{1}} + \sum_i \gtilde^{i\ov{i}} \ghat_{i\ov{i} 1 \ov{1}} - \frac{1}{S} \psi_{1\ov{1}} - H
\end{align}
where
\begin{equation}\label{Hdef}
H = \frac{\sum_{i,j} \gtilde^{i\ov{i}} \gtilde^{j\ov{j}} \left( g_{j\ov{i}} \sum_a u_{a\ov{a}\ov{i}} - u_{j \ov{i} \ov{1}} + \hat{h}_{j \ov{i} \ov{1}} \right) \left( g_{i\ov{j}} \sum_b u_{b \ov{b} 1} - u_{i\ov{j} 1} + \hat{h}_{i \ov{j} 1} \right)}{(n-1)^2}.
\end{equation}
Applying the commutation rule \eqref{commutationrelation}, \eqref{dotu11} becomes
\begin{align}\label{dotu112}
\dot{u}_{1\ov{1}} = & -H + \sum_{i} \Theta^{i\ov{i}} u_{ 1 \ov{1} i \ov{i}} + \sum_i \gtilde^{i\ov{i}} \ghat_{i\ov{i} 1 \ov{1}} - \frac{1}{S} \psi_{1\ov{1}} \\ \nonumber
& \ + \sum_i \Theta^{i\ov{i}} \left( u_{p \ov{i}} {R_{1\ov{1}i}}^p - u_{p\ov{1}} {R_{i\ov{i}1}}^p \right) \\ \nonumber
& \ - \sum_i \Theta^{i\ov{i}} \left( T_{1i}^p u_{p \ov{1} i} + \overline{T_{1i}^p} u_{1 \ov{p} i} + T_{i1}^p \overline{T_{1i}^q} u_{p\ov{q}} \right).
\end{align}
Combining \eqref{timeEvoQ}, \eqref{etadot11}, \eqref{dotu112}, and the fact that
\begin{equation*}
u_{1\ov{1}i\ov{i}} = \eta_{1\ov{1} i\ov{i}} + \hat{h}_{1\ov{1} i\ov{i}} - (\tr{g}{\ghat})_{i\ov{i}}
\end{equation*}
we have the evolution equation
\begin{align}\label{timeEvoQ2}
\ddt Q & = -(1+2c) \frac{H}{\eta_{1\ov 1}} + (1+2c) \sum_{i} \frac{\Theta^{i \ov i} \eta_{1 \ov 1 i \ov i}}{\eta_{1 \ov 1}} \\ \nonumber
& \ \ \ +  \frac{1+2c}{\eta_{1\ov{1}}} \Big( \chi_{1\ov{1}} - \frac{1}{S}\psi_{1\ov{1}} +  \sum_i \Theta^{i\ov{i}} \left( u_{p \ov{i}} {R_{1\ov{1}i}}^p - u_{p\ov{1}} {R_{i\ov{i}1}}^p \right) \\ \nonumber
& \ \ \  + \sum_i \gtilde^{i\ov{i}} \ghat_{i\ov{i} 1 \ov{1}} + \sum_{i} \Theta^{i\ov{i}} \left( \hat{h}_{1\ov{1}i\ov{i}} - (\tr{g}{\ghat})_{i\ov{i}} \right) \Big) \\ \nonumber
& \ \ \ - \frac{2(1+2c)}{\eta_{1\ov{1}}}  \sum_{i,p} \Theta^{i\ov{i}} \textrm{Re} \left( \overline{T_{1i}^p} u_{p \ov{1} i} \right) - \frac{1+2c}{\eta_{1\ov{1}}}  \sum_{i,p} \Theta^{i\ov{i}} T_{i1}^p \overline{T_{1i}^q} u_{p\ov{q}} \\ \nonumber
& \ \ \ + \varphi' \left( \sum_p \dot{u}_p u_{\ov{p}} + \sum_{p} \dot{u}_{\ov{p}} u_p \right) + \nu ' \dot{u}.
\end{align}
Subtracting \eqref{timeEvoQ2} and \eqref{LofQbound} we obtain the evolution equation bound at $(x_0,t_0)$,
\begin{align}\label{ddtLQ}
0 & \leq  \left( \ddt - L \right) Q \\ \nonumber
& \leq  -(1+2c)\frac{H}{\eta_{1 \ov 1}} \\ \nonumber
& \ \ \ - \frac{c}{2} \sum_i \sum_{p \neq 1} \frac{\Theta^{i\ov{i}} |\eta_{p \ov{1} i} |^2}{(\eta_{1\ov{1}})^2} - \frac{c}{2} \sum_i \sum_{p \neq 1} \frac{\Theta^{i\ov{i}} |\eta_{1 \ov{p} i} |^2}{(\eta_{1\ov{1}})^2} + (1 + 2c) \sum_i \frac{ \Theta^{i\ov{i}} | \eta_{1 \ov{1} i } |^2 }{(\eta_{1\ov{1}})^2} \\ \nonumber
& \ \ \ + C \tr{\gtilde}{g} + \frac{1+2c}{\eta_{1\ov{1}}} \Big( \chi_{1\ov{1}} - \frac{1}{S}\psi_{1\ov{1}} +  \sum_i \Theta^{i\ov{i}} \left( u_{p \ov{i}} {R_{1\ov{1}i}}^p - u_{p\ov{1}} {R_{i\ov{i}1}}^p \right) \\ \nonumber
& \ \ \ + \sum_i \gtilde^{i\ov{i}} \ghat_{i\ov{i} 1 \ov{1}} + \sum_{i} \Theta^{i\ov{i}} \left( \hat{h}_{1\ov{1}i\ov{i}} - (\tr{g}{\ghat})_{i\ov{i}} \right) \Big) \\ \nonumber
& \ \ \ - \frac{2(1+2c)}{\eta_{1\ov{1}}}  \sum_{i,p} \Theta^{i\ov{i}} \textrm{Re} \left( \overline{T_{1i}^p} u_{p \ov{1} i} \right) - \frac{1+2c}{\eta_{1\ov{1}}}  \sum_{i,p} \Theta^{i\ov{i}} T_{i1}^p \overline{T_{1i}^q} u_{p\ov{q}} \\ \nonumber
& \ \ \ + \nu ' \left(\ddt - L\right)u \\ \nonumber
& \ \ \ - \nu '' \sum_i \Theta^{i\ov{i}} |u_i|^2 -  \varphi '' \sum_{i} \Theta^{i\ov{i}} \left \vert \sum_{p} u_p u_{\ov{p}i} + \sum_p u_{p i} u_{\ov{p}} \right \vert^2 \\ \nonumber
& \ \ \ - \varphi ' \sum_{i,p} \Theta^{i\ov{i}} \left( | u_{p \ov{i}} |^2 + | u_{p i} |^2 \right)  \\ \nonumber 
& \ \ \ + \varphi ' \sum_p \left( \left( \dot{u}_p - \sum_i \Theta^{i\ov{i}} u_{p i \ov{i}} \right)  u_{\ov{p}} + u_{\ov{p}} \left( \dot{u}_{\ov{p}} - \sum_i \Theta^{i\ov{i}} u_{\ov{p} i \ov{i}} \right) \right) \\ \nonumber
& = (1) + (2) + (3) + (4) + (5) + (6) + (7) + (8) + (9)
\end{align}
where $(1)$ through $(9)$ correspond to the lines in the last inequality. We now bound each of the lines of \eqref{ddtLQ} from above.

\vspace{0.2in}

\noindent
Lines (3) and (4): Using \eqref{sumThetaii} and \eqref{uijbound2eta} we have the upper bound
\begin{equation*}
(3) + (4) \leq C \tr{\gtilde}{g} + C.
\end{equation*}

\vspace{0.2in}

\noindent
Line (5):  As in \cite{TW5}, using the second term from line (2) we can bound line (5). Covariantly differentiating \eqref{etaij},
\begin{equation*}
u_{1 \ov{p} i} = \eta_{1 \ov{p} i} - (\tr{g}{\ghat})_i g_{1 \ov{p}} + \hat{h}_{1 \ov{p} i}
\end{equation*}
and so
\begin{equation}\label{torsion1}
- \frac{2(1+2c)}{\eta_{1\ov{1}}}  \sum_{i,p} \Theta^{i\ov{i}} \textrm{Re} \left( \overline{T_{1i}^p} u_{p \ov{1} i} \right) \leq - \frac{2(1+2c)}{\eta_{1\ov{1}}}  \sum_{i,p} \Theta^{i\ov{i}} \textrm{Re} \left( \overline{T_{1i}^p} \eta_{p \ov{1} i} \right) + C \tr{\gtilde}{g}. 
\end{equation}
Since $T^1_{11} = 0$, the term from the sum with $p = 1$ is
\begin{equation}\label{torsion2}
-\frac{2(1+2c)}{\eta_{1\ov{1}}} \sum_{i} \Theta^{i\ov{i}} \textrm{Re} \left( \overline{T_{1i}^1} \eta_{1 \ov{1} i} \right) = -\frac{2(1+2c)}{\eta_{1\ov{1}}} \sum_{i \neq 1} \Theta^{i\ov{i}} \textrm{Re} \left( \overline{T_{1i}^1} \eta_{1 \ov{1} i} \right).
\end{equation}
The remaining summands can be bounded by
\begin{equation}\label{torsion3}
- \frac{2(1+2c)}{\eta_{1\ov{1}}}  \sum_{i} \sum_{p \neq 1} \Theta^{i\ov{i}} \textrm{Re} \left( \overline{T_{1i}^p} \eta_{p \ov{1} i} \right) \leq \frac{c}{4} \sum_i \sum_{p \neq 1} \Theta^{i\ov{i}} \frac{|\eta_{1 \ov{p} i}|^2}{(\eta_{1\ov{1}})^2} + C \tr{\gtilde}{g}.
\end{equation}
Putting together \eqref{torsion1}, \eqref{torsion2}, \eqref{torsion3} and controlling the second term in (6) using \eqref{uijbound2eta} we have the bound
\begin{equation*}
(6) \leq -\frac{2(1+2c)}{\eta_{1\ov{1}}} \sum_{i \neq 1} \Theta^{i\ov{i}} \textrm{Re} \left( \overline{T_{1i}^1} \eta_{1 \ov{1} i} \right) + \frac{c}{4} \sum_i \sum_{p \neq 1} \Theta^{i\ov{i}} \frac{|\eta_{1 \ov{p} i}|^2}{(\eta_{1\ov{1}})^2} + C \tr{\gtilde}{g}.
\end{equation*}

\vspace{0.2in}

\noindent
Line (6): Applying \eqref{lu}, the uniform bound for $\dot{u}$, and \eqref{nubounds},
\begin{align*}
(6) & =   \nu ' \dot{u} - n \nu ' + \nu ' \tr{\gtilde}{\ghat} \\ \nonumber
& \leq  3C (C_1+1) + 3(C_1 +1)n - (C_1 + 1) \tr{\gtilde}{\ghat} \\ \nonumber
& \leq  C - (C_1 + 1) \tr{\gtilde}{\ghat}
\end{align*}
remembering that $C_1 > 0$ is to be determined.

\vspace{0.2in}

\noindent
Lines (8) and (9): For line (9), commuting covariant derivatives and recalling \eqref{dotup}
\begin{align*}
(9) & =  \varphi ' \sum_p \left( \left( \dot{u}_p - \sum_i \Theta^{i\ov{i}} u_{ i \ov{i} p } \right)  u_{\ov{p}} + u_{\ov{p}} \left( \dot{u}_{\ov{p}} - \sum_i \Theta^{i\ov{i}} u_{ i \ov{i} \ov{p} } \right) \right) \\ \nonumber
& \ \ \ - \varphi ' \sum_{i,p} \Theta^{i\ov{i}} u_q u_{\ov{p}} {R_{i \ov{i} p}}^q + 2 \mathrm{Re} \varphi ' \sum_{i,p,q} \Theta^{i\ov{i}} u_{\ov{p}} u_{q\ov{i}} T_{ip}^q \\ \nonumber
& =  \varphi ' \sum_{i,p} \gtilde^{i\ov{i}} \left( \ghat_{i\ov{i}p} u_{\ov{p}} + \ghat_{i\ov{i}\ov{p}} u_{p}\right) - \varphi ' \sum_p \left( \frac{\psi_p}{S} u_{\ov{p}} + \frac{\psi_{\ov{p}}}{S} u_p \right) \\ \nonumber
& \ \ \ - \varphi ' \sum_{i,p} \Theta^{i\ov{i}} u_q u_{\ov{p}} {R_{i \ov{i} p}}^q + 2 \mathrm{Re} \varphi ' \sum_{i,p,q} \Theta^{i\ov{i}} u_{\ov{p}} u_{q\ov{i}} T_{ip}^q.
\end{align*}
Thankfully, $\varphi '$ can be used to control the single derivatives of $u$ via \eqref{phibounds}. Combining this and \eqref{sumThetaii},
\begin{equation*}
(9) \leq  C + C \tr{\gtilde}{g} + \frac{1}{10} \varphi ' \sum_{i,p} \Theta^{i\ov{i}} \left( |u_{p\ov{i}}|^2 + |u_{pi}|^2 \right).
\end{equation*}
Together with (8) we have the upper bound
\begin{equation*}
(8)+(9) \leq  C + C \tr{\gtilde}{g} - \frac{9}{10} \varphi ' \sum_{i,p} \Theta^{i\ov{i}} \left( |u_{p\ov{i}}|^2 + |u_{pi}|^2 \right).
\end{equation*}

Combining the above estimates for the lines in \eqref{ddtLQ}, we have
\begin{align*}
0 & \leq  -(1+2c) \frac{H}{\eta_{1 \ov 1}} \\ \nonumber
& \ \ \ - \frac{c}{2} \sum_i \sum_{p \neq 1} \frac{\Theta^{i\ov{i}} |\eta_{p \ov{1} i} |^2}{(\eta_{1\ov{1}})^2} - \frac{c}{4} \sum_i \sum_{p \neq 1} \frac{\Theta^{i\ov{i}} |\eta_{1 \ov{p} i} |^2}{(\eta_{1\ov{1}})^2} + (1 + 2c) \sum_i \frac{ \Theta^{i\ov{i}} | \eta_{1 \ov{1} i } |^2 }{(\eta_{1\ov{1}})^2} \\ \nonumber
& \ \ \ - \nu '' \sum_i \Theta^{i\ov{i}} |u_i|^2 -  \varphi '' \sum_{i} \Theta^{i\ov{i}} \left \vert \sum_{p} u_p u_{\ov{p}i} + \sum_p u_{p i} u_{\ov{p}} \right \vert^2 \\ \nonumber
& \ \ \ +  C + C_0 \tr{\gtilde}{g} - \frac{9}{10} \varphi ' \sum_{i,p} \Theta^{i\ov{i}} \left( |u_{p\ov{i}}|^2 + |u_{pi}|^2 \right) \\ \nonumber
& \ \ \ - \frac{2(1+2c)}{\eta_{1\ov{1}}} \sum_{i \neq 1} \Theta^{i\ov{i}} \textrm{Re} \left( \overline{T_{1i}^1} \eta_{1\ov{1}i} \right) - (C_1 + 1) \tr{\gtilde}{\ghat}.
\end{align*}
This is the same inequality as part way through the second order estimate in \cite{TW5}. Since we are fixed at the point $(x_0,t_0)$, $\ghat$ is a fixed Hermitian metric. This lets us choose $C_1 > 0$ uniform and large such that
\begin{equation*}
(C_0 + 2) \tr{\gtilde}{g} \leq (C_1 + 1) \tr{\gtilde}{\ghat}. 
\end{equation*}
The remainder of the estimate goes through exactly as in \cite{TW5} and we will not reproduce it here. This gives the bound
\begin{equation*}
\eta_{1\ov{1}}(x_0,t_0) \leq CK
\end{equation*}
for uniform $C > 0$ which completes the proof as discussed above.
\end{proof}
 
\section{First order estimate}

Given the form of our second order estimate we require a first order estimate for $u$. For the proof we modify the argument of \cite{TW4} to apply in this parabolic setting.

\begin{lemma}\label{1est}
There exists a uniform $C > 0$ such that
\begin{equation}\label{nablaubound}
\sup_{M \times [0,S)} |\nabla u|^2_g \leq C.
\end{equation}
\end{lemma}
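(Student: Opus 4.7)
The plan is to adapt the blow-up and Liouville theorem argument of Tosatti-Weinkove \cite{TW4} to the parabolic setting. Argue by contradiction: suppose no uniform bound on $|\nabla u|^2_g$ exists, and choose a sequence $(x_k, t_k) \in M \times [0, S)$ with
\begin{equation*}
M_k := |\nabla u|_g(x_k, t_k) = \sup_{M \times [0, t_k]} |\nabla u|_g \longrightarrow \infty.
\end{equation*}
Pick holomorphic coordinates at $x_k$ on a ball of fixed radius $r > 0$ with $g_{i\ov{j}}(x_k) = \delta_{i\ov{j}}$, and introduce the parabolically rescaled functions
\begin{equation*}
\tilde{u}_k(\hat{z}, \hat{s}) := u\bigl(x_k + \hat{z}/M_k,\, t_k + \hat{s}/M_k^2\bigr),
\end{equation*}
defined for $\hat{z} \in B(0, r M_k)$ and $\hat{s} \in [-t_k M_k^2, 0]$, along with the rescaled metrics $\hat{g}_k(\hat{z}) := g(x_k + \hat{z}/M_k)$, which converge smoothly on compact sets to the flat Euclidean metric on $\C^n$.

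By construction $|\nabla_{\hat{g}_k} \tilde{u}_k|_{\hat{g}_k} = M_k^{-1} |\nabla u|_g$, hence $|\nabla_{\hat{g}_k} \tilde{u}_k| \leq 1$ on the whole domain with equality at the origin, while Lemma \ref{0est} yields $|\tilde{u}_k| \leq C$ and $|\partial_{\hat{s}} \tilde{u}_k| = M_k^{-2} |\dot{u}| = O(M_k^{-2})$. The second order estimate \eqref{tracebound} gives $\tr{\omega}{\omegatilde} \leq C(M_k^2 + 1)$ throughout $M \times [0, t_k]$, and since $\omegatilde > 0$ this controls each eigenvalue of $u_{i\ov j}$ by $O(M_k^2)$; the spatial rescaling is precisely tuned so that $(\tilde{u}_k)_{\hat{z}^i \bar{\hat{z}}^j} = M_k^{-2} u_{i\ov{j}}$ is then uniformly bounded on compact subsets. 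Standard interpolation combined with the vanishing time derivative yields uniform $C^{1,\alpha}_{\mathrm{loc}}$ bounds; passing to a diagonal subsequence produces a Lipschitz limit $\tilde{u}_\infty \in W^{2,\infty}_{\mathrm{loc}}(\C^n)$, independent of $\hat{s}$ (because $\partial_{\hat{s}} \tilde{u}_k \to 0$ uniformly), satisfying $|\nabla \tilde{u}_\infty| \leq 1$ a.e.\ with $|\nabla \tilde{u}_\infty(0)| = 1$.

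Rewriting the flow as $\det(\Psi_t + \ddbar u \wedge \omega^{n-2}) = e^{\dot{u} + \psi/S} \det \omega^{n-1}$ via \eqref{ddtu2}, one checks that under the rescaling $\Psi_t$ converges to a positive multiple of $\omega_{\mathrm{Eucl}}^{n-1}$ while the right hand side remains pinched between positive constants. Thus $\tilde{u}_\infty$ is a bounded Lipschitz weak solution on $\C^n$ of an equation of the form
\begin{equation*}
\det\!\bigl(\omega_{\mathrm{Eucl}}^{n-1} + \ddbar \tilde{u}_\infty \wedge \omega_{\mathrm{Eucl}}^{n-2}\bigr) = f, \qquad 0 < c_1 \leq f \leq c_2,
\end{equation*}
and in particular an $(n-1)$-plurisubharmonic function on $\C^n$. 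Invoking the Liouville-type theorem for bounded $(n-1)$-plurisubharmonic functions on $\C^n$ from \cite{TW4} forces $\tilde{u}_\infty$ to be constant, contradicting $|\nabla \tilde{u}_\infty(0)| = 1$.

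The principal obstacle is arranging the two rescalings so that (i) Lemma \ref{2est} translates cleanly into a uniform Hessian bound for the family $\tilde{u}_k$, and (ii) the parabolic time derivative becomes subdominant in the limit. Both points hinge on the $M_k^{-2}$ scaling of the time variable together with the uniform bounds on $u$ and $\dot{u}$ from Lemma \ref{0est}, which collectively kill every lower order term and reduce the problem to the purely elliptic Liouville theorem already available in \cite{TW4}.
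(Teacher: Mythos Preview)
Your overall strategy---blow up and apply the Liouville theorem of \cite{TW4}---matches the paper's, and your parabolic time rescaling (which the paper omits, working instead with frozen time slices $u_j := u(\cdot, t_j)$) is harmless since the uniform bound on $\dot u$ kills the time dependence in the limit. However, your identification of the limiting equation is incorrect, and the error is not cosmetic.

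Under the spatial rescaling $\hat z = M_k(z-x_k)$ the Hessian entries $u_{i\ov j}$ have size $O(M_k^2)$, while the background $\ghat_{i\ov j}$ (equivalently $\Psi_t$) has entries $O(1)$. Normalizing by $M_k^{-2}$ so that the rescaled Hessian $(\tilde u_k)_{i\ov j}$ is bounded, one obtains
\[
M_k^{-2}\,\gtilde_{i\ov j} \;=\; M_k^{-2}\,\ghat_{i\ov j} \;+\; \tfrac{1}{n-1}\bigl((g^{p\ov q}(\tilde u_k)_{p\ov q})\,g_{i\ov j}-(\tilde u_k)_{i\ov j}\bigr),
\]
and the first term on the right tends to zero. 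Thus the limit is $P(\tilde u_\infty)=\tfrac{1}{n-1}\bigl((\Delta\tilde u_\infty)\beta-\ddbar\tilde u_\infty\bigr)$ with \emph{no} background term; your assertion that $\Psi_t$ converges to a positive multiple of $\omega_{\mathrm{Eucl}}^{n-1}$ is wrong. Likewise $\det\gtilde$ is uniformly bounded by Lemma~\ref{0est}, so $\det(M_k^{-2}\gtilde)=M_k^{-2n}\det\gtilde\to 0$; the right hand side vanishes in the limit rather than being pinched between positive constants. The correct limiting statement is $P(\tilde u_\infty)\ge 0$ together with $P(\tilde u_\infty)^n=0$, i.e.\ $\tilde u_\infty$ is a \emph{maximal} $(n-1)$-PSH function.

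This matters because the Liouville theorem of \cite{TW4} (Theorem~\ref{liouville} above) applies to bounded Lipschitz \emph{maximal} $(n-1)$-PSH functions; there is no such result for solutions of an equation with merely bounded positive right hand side, and indeed the equation you wrote, with $c_1>0$ and a surviving background $\omega_{\mathrm{Eucl}}^{n-1}$, would not even force $P(\tilde u_\infty)\ge 0$. Once the scaling is corrected the argument goes through as in the paper, which also defers to \cite{TW4} for the delicate passage of $P(\tilde u_k)^n\to 0$ to the weak limit.
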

The proof of this lemma requires a bit of machinery which we will recall from \cite{TW4}. Let $\beta$ be the Euclidean K\"ahler form on $\C^n$ and $\Delta$ the Laplacian with respect to $\beta$. Let $\Omega \subset \C^n$ be a domain. We say that an upper semi-continuous function
\begin{equation*}
u : \Omega \to \R \cup \{-\infty \}
\end{equation*}
in $L_{loc}^1(\Omega)$ is $(n-1)$-PSH if
\begin{equation*}
P(u) := \frac{1}{n-1} \left( (\Delta u) \beta - \ddbar u \right) \geq 0
\end{equation*} 
as a $(1,1)$-current. A continuous $(n-1)$-PSH function $u$ is maximal if for any relatively compact open set $\Omega' \Subset \Omega$ and any continuous $(n-1)$-PSH function $v$ on a domain $\Omega ' \Subset \Omega '' \Subset \Omega$ and with $v \leq u$ on $\partial \Omega '$, then $v \leq u$ on $\Omega '$.  

We need the following Liouville-type theorem from \cite{TW4}. 
\begin{theorem}\label{liouville}(Tosatti-Weinkove)
If $u : \C^n \to \R$ is an $(n-1)$-PSH function in $\C^n$ which is Lipschitz continuous, maximal, and satisfies
\begin{equation*}
\sup_{\C^n} \left( |u| + |\nabla u| \right) < \infty
\end{equation*}
then $u$ is constant.
\end{theorem}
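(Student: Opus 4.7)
The plan is to adapt the Blocki-style Liouville argument for maximal PSH functions to the $(n-1)$-PSH class. Maximality gives a degenerate Monge-Amp\`ere-type equation $\det P(u) = 0$, where $P(u) = \frac{1}{n-1}((\Delta u)\beta - \ddbar u) \geq 0$ is the positive $(1,1)$-current characterizing $(n-1)$-PSH functions; this is the natural analog of $(\ddbar u)^n = 0$ satisfied by maximal PSH functions. Combined with $P(u) \geq 0$ and the uniform bounds on $|u|$ and $|\nabla u|$, the goal is to use integration by parts on large balls to show that $P(u) \equiv 0$, and then to finish by a standard Liouville-type argument.

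Concretely, I would take smooth cutoffs $\chi_R$ equal to $1$ on $B_R$ and supported in $B_{2R}$, with $|\nabla \chi_R| \leq C/R$ and $|\ddbar \chi_R| \leq C/R^2$, and pair the degenerate equation against an appropriate positive test form built from $P(u)$, $\partial u \wedge \ov{\partial} u$, and $\chi_R$. Integration by parts moves derivatives off $u$ onto $\chi_R$, so the uniform bounds on $u$ and $\nabla u$ control the resulting boundary term by $C \cdot R^{-2} \cdot \mathrm{vol}(B_{2R} \setminus B_R)$. With a sharper cutoff exploiting the full $|u|+|\nabla u|$ bound if needed, this tends to $0$ as $R \to \infty$ and forces the positive test form to vanish identically on $\C^n$, giving $P(u) \equiv 0$. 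Tracing this equation against $\beta$ yields $(n-1)\Delta u = 0$, so $\Delta u = 0$, and then the $(n-1)$-PSH equation reduces to $\ddbar u = 0$. Thus $u$ is pluriharmonic; being bounded and Lipschitz on $\C^n$, it is constant by the classical Liouville theorem for harmonic functions.

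The main obstacle will be the integration-by-parts step. Making sense of the mixed products $P(u)^k \wedge \partial u \wedge \ov{\partial} u \wedge \beta^{n-k-1}$ as well-defined positive currents, and establishing a continuity theorem for them under uniform Lipschitz approximation, requires pluripotential theory specific to the $(n-1)$-PSH setting parallel to the Bedford-Taylor theory of the Monge-Amp\`ere operator. The operator $P$ is a non-diagonal combination of complex Hessian entries, so the standard positivity identities for $\ddbar u$ do not transfer directly, and the coupling with the Lipschitz bound via the volume growth of $B_{2R}$ must be handled carefully. This pluripotential-theoretic content, rather than any further PDE maximum-principle argument, is the genuine heart of the theorem.
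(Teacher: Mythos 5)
The paper does not actually prove this statement: it is quoted from Tosatti--Weinkove \cite{TW4}, where it is established by adapting an idea of Dinew--Ko\l odziej \cite{DK} --- a translation/comparison argument that uses maximality directly against explicit global $(n-1)$-PSH competitors (built from translates of $u$ and the quadratic $\varepsilon|z|^2$, whose $P$-form is $\varepsilon\beta>0$) on large balls. Your proposal takes a different route, and it has a fatal gap at its central step. Maximality yields at best (and only after developing a Bedford--Taylor-type theory for this operator) the degenerate equation $\det P(u)=0$, which says that \emph{one} eigenvalue of $P(u)$ vanishes at each point and carries no information about the remaining $n-1$ eigenvalues. No integration by parts against cutoffs can upgrade this to $P(u)\equiv 0$: the estimates you describe control only total masses on $B_R$ by powers of $R$ strictly below the volume growth (e.g.\ $\int_{B_R}\Delta u=\int_{\partial B_R}\partial_\nu u\le CR^{2n-1}$), i.e.\ they show certain averages tend to zero, which is entirely compatible with $P(u)$ being a nonzero positive current concentrated on, or spread thinly over, lower-dimensional sets. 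Locally the implication you need is simply false: $u=|z^1|^2$ is smooth, $(n-1)$-PSH and maximal on any bounded domain, with $\det P(u)=0$ but $P(u)=\frac{1}{n-1}\mathrm{diag}(0,1,\dots,1)\ne 0$. So the global boundedness on $\C^n$ must enter in an essentially non-integral way, which is exactly what the Dinew--Ko\l odziej comparison supplies and your scheme does not.

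A secondary obstruction: $P(u)=\frac{1}{n-1}\left((\Delta u)\beta-\ddbar u\right)$ is neither closed nor exact, so the Bedford--Taylor integration-by-parts identities (which rest on $\ddbar u$ being $\ddbar$ of something) are not available for the mixed products $P(u)^k\wedge\sqrt{-1}\,\partial u\wedge\ov{\partial} u\wedge\beta^{n-k-1}$; this is not merely a matter of transferring ``positivity identities'' but the collapse of the mechanism that moves derivatives onto $\chi_R$. (Your final step could in any case be shortened: if one did know $P(u)\equiv 0$, tracing gives $\Delta u=0$, and a bounded harmonic function on $\R^{2n}$ is already constant.)
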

The proof of this result uses an idea of Dinew-Ko\l odziej \cite{DK}. With these definitions and the Liouville-type theorem, we now begin the proof of Lemma \ref{1est}.
\begin{proof}
Suppose for contradiction that \eqref{nablaubound} does not hold. Then there exists a sequence $(x_j,t_j) \in M \times [0,S)$ with $t_j \to S$ such that
\begin{equation*}
\lim_{j\to\infty}    |\nabla u(x_j, t_j)|^2_g  = \infty.
\end{equation*}
Without loss of generality we assume our $t_j$ are such that
\begin{equation*}
 \sup_{x \in M}  | \nabla u(x,t_j) |_g^2 = \sup_{M \times [0,t_j]}  | \nabla u |_g^2.
\end{equation*}
Additionally, we choose our $x_j$ to be a point at which $|\nabla u(\cdot,t_j)|_g$ attains its maximum. We define
\begin{equation*}
C_j := | \nabla u(x_j,t_j) |_g^2  = \sup_{M \times [0,t_j]}  | \nabla u|_g^2
\end{equation*}
which has the property $C_j \to \infty$ as $j \to \infty$.

With this setup, we are ready to apply the blow-up argument and the Liouville-type theorem from \cite{TW4} to obtain a contradiction. After passing to a subsequence, there exists an $x$ in $M$ such that $x_j \to x$ as $j \to \infty$. Fix holomorphic coordinates $(z^1,\ldots, z^n)$ centered at $x$ with $\omega(x) = \beta$ and identifying with the ball $B_2(0) \subset \C^n$. Also assume that $j$ is sufficiently large so that $x_j \in B_1(0)$. We define
\begin{align*}
u_j(x) & = u(x,t_j) \\
\Phi_j(z) & = C_j^{-1}z + x_j
\end{align*}
and
\begin{equation*}
\hat{u}_j(z) := \left(u_j \circ \Phi_j (z)\right) = u_j \left( C_j^{-1} z + x_j \right) \textrm{ for } z \in B_{C_j}(0).
\end{equation*}
Note that by construction $\hat{u}_j$ achieves its maximum at $z = 0$ and
\begin{equation}\label{ujnonconst}
|\nabla \hat{u}_j|_\beta(0) = C_j^{-1} | \nabla u(x_j) |_g = 1.
\end{equation}
We also have the uniform bounds
\begin{equation*}
\sup_{B_{C_j}(0)} |\hat{u}_j|_\beta \leq C, \ \ \ \sup_{B_{C_j}(0)} |\nabla \hat{u}_j|_\beta \leq 1.
\end{equation*}
Using Lemma \ref{2est} on $[0,t_j]$ (see \eqref{cpctinterval2est})
\begin{equation*}
\sup_{y \in M} |\ddbar u(y,t_j)|_g \leq C' \left( \sup_{M \times [0,t_j]} |\nabla u|_g^2 + 1 \right ) = C' C_j^2 + C'
\end{equation*}
which gives the estimate
\begin{equation*}
\sup_{B_{C_j}(0)} | \ddbar \hat{u}_j |_\beta \leq \frac{C}{C_j^2} \sup_{y \in M} | \ddbar u(y,t_j) |_g \leq C''.
\end{equation*}
For every compact $K \subset \C^n$, every $0 < \alpha < 1$, and every $p > 1$ there exists uniform $C > 0$ such that
\begin{equation*}
|| \hat{u}_j ||_{C^{1,\alpha}(K)} + ||\hat{u}_j||_{W^{2,p}(K)} \leq C
\end{equation*}
using the Sobolev embedding theorem. From this we have a function $u \in W^{2,p}_{loc}(\C^n)$ such that a subsequence $\hat{u}_j$ converges strongly in $C^{1,\alpha}_{loc}(\C^n)$ and weakly in $W^{2,p}_{loc}(\C^n)$ to $u$. Thus from the estimates for $\hat{u}_j$ we have the uniform bounds
\begin{equation*}
\sup_{\C^n} ( |u| + |\nabla u| ) \leq C
\end{equation*}
and from \eqref{ujnonconst} $u$ is nonconstant. Following the remainder of the argument for the elliptic case in \cite{TW4} shows that $u$ is maximal and is hence constant by the Liouville-type theorem, a contradiction. 
\end{proof}

\section{Higher order estimates and proof of the main theorem}

To finish the proof of the main theorem, it sufficed to prove the uniform higher order estimates
\begin{equation*}
||u||_{C^k(M,g)} \leq C_k
\end{equation*}
for $k = 0, 1, 2, \ldots$. With these estimates the flow converges smoothly as $t \to S$ to a metric $\omega_S$. We extend the flow to $[0,S]$ with $\omega_t|_{t=S} = \omega_S$ allowing us to begin the flow once more. This contradicts the fact that $S$ is maximal so we must have $S = T$ since the flow cannot exist beyond $T$. We now prove the higher order estimates.

Summarizing our current estimates for $u$, we have
\begin{equation*}
\sup_{M\times [0,S)} |u| + \sup_{M\times [0,S)}  |\nabla u|_g + \sup_{M\times [0,S)}  | \ddbar u |_g + \sup_{M\times [0,S)} |\dot{u}|_g \leq C
\end{equation*}
for a uniform $C > 0$. Note that from the volume form bound in Lemma \ref{0est} and the trace bound in Lemma \ref{2est} we have that $\gtilde$ is uniformly equivalent to g:
\begin{equation}\label{unifequiv}
\frac{1}{C} g \leq \gtilde \leq C g.
\end{equation}
Using standard parabolic theory, the higher order estimates follow from a uniform parabolic $C^{2+\alpha}(M,g)$ bound for $u$ for some $\alpha > 0$. This can be done via the parabolic Evans-Krylov method as in \cite{G} with some modification (also see \cite{GT,L}).

Let $B_R$ be a small ball in $\mathbb{C}^n$ of radius $R > 0$ centered at the origin. Let $\varepsilon > 0$ and fix $t_0 \in [\varepsilon, T)$. We work in the parabolic cylinder
\begin{equation*}
Q(R,t_0) = \left \{ (x,t) \in B_R \times [0,S) \ | \  t_0 - R^2 \leq t \leq t_0 \right \}.
\end{equation*}    
Let $\{ \gamma_i \}$ be a basis for $\mathbb{C}^n$. For the $C^{2+\alpha}(M,g)$ estimate it suffices to prove the bound
\begin{equation*}
\sum_{i=1}^n \textrm{osc}_{Q(R,t_0)}(u_{\gamma_i \ov{\gamma}_i}) + \textrm{osc}_{Q(R,t_0)}(\dot{u}) \leq C R^\delta
\end{equation*}
for any $t_0 \in [\varepsilon,S)$, for some uniform $C > 0$, some $R > 0$ sufficiently small, and some $\delta > 0$.

We first rewrite the flow \eqref{ddtu} as
\begin{equation}\label{logdet}
-\ddt u + \log \det \gtilde = \tilde{F}
\end{equation}
where $\tilde{F} = \psi/S + \log \Omega.$ Let $\gamma$ be an arbitrary unit vector in $\mathbb{C}^n$. We differentiate the flow covariantly and commute derivatives as in \eqref{dotu11} and \eqref{dotu112} to  obtain
\begin{equation*}
- \ddt u_{\gamma \ov{\gamma}} + \Theta^{i\ov{j}} u_{i \ov{j} \gamma \ov{\gamma}} \geq G + \frac{H}{\eta_{1\ov 1}} - C \sum_{p,q} |u_{p\ov{q}\gamma}|
\end{equation*}
where $G$ is bounded function (using our existing uniform estimates) and
\begin{equation*}
H = \frac{\gtilde^{i\ov{q}} \gtilde^{p\ov{j}} \left(g_{p\ov{q}} g^{r\ov{s}} u_{r \ov{s} \ov{\gamma}} - u_{p \ov{q} \ov{\gamma}} + \hat{h}_{p\ov{q}\ov{\gamma}} \right) \left(g_{i\ov{j}} g^{a\ov{b}} u_{a\ov{b}\gamma} - u_{i \ov{j} \gamma} + \hat{h}_{i\ov{j}\gamma}\right)}{(n-1)^2}
\end{equation*}
as in \eqref{Hdef}. Converting the covariant derivatives to partial derivatives,
\begin{equation*}
- \ddt u_{\gamma \ov{\gamma}} + \Theta^{i\ov{j}}\partial_i  \partial_{\ov j} u_{ \gamma \ov{\gamma}} \geq G + H - C \sum_{p,q} |u_{p\ov{q}\gamma}|
\end{equation*}
for a larger $C > 0$. The latter two terms cancel because we have the estimate
\begin{align*}
\frac{H}{\eta_{1\ov 1}} & \geq \frac{1}{C'} \gtilde^{i\ov{q}} \gtilde^{p\ov{j}} \left(g_{p\ov{q}} g^{r\ov{s}} u_{r \ov{s} \ov{\gamma}} - u_{p \ov{q} \ov{\gamma}}  \right) \left(g_{i\ov{j}} g^{a\ov{b}} u_{a\ov{b}\gamma} - u_{i \ov{j} \gamma}\right) - C' \\
& \geq \frac{1}{C'} \left( (n-2) |g^{r\ov{s}} u_{r\ov{s}\gamma}|^2 +  \gtilde^{i\ov{q}} \gtilde^{p\ov{j}} u_{i\ov{j}\gamma} u_{p\ov{q}\ov{\gamma}} \right) - C' \\
& \geq C \sum_{p,q} |u_{p\ov{q}\gamma}|- C'
\end{align*}
for a uniform constant $C' > 0$, giving the bound
\begin{equation}\label{keyholder1}
- \ddt u_{\gamma \ov{\gamma}} + \Theta^{i\ov{j}} \partial_i \partial_{\ov j} u_{ \gamma \ov{\gamma}} \geq G.
\end{equation}
We also have
\begin{equation}\label{keyholder2}
-\ddt \dot{u} + \Theta^{i\ov{j}} \partial_i \partial_{\ov j} \dot{u} = \frac{(\tr{\gtilde}{g})(\tr{g}{\chi}) - \tr{\gtilde}{\chi}}{n-1} \leq C
\end{equation}
using \eqref{udotdot}, \eqref{ludot}, Lemma \ref{2est}, and Lemma \ref{1est} for a uniform $C > 0$. 

As in \cite{TW4,TW5} we define a metric $g'_{i\ov{j}} = g_{i\ov{j}}(x_0)$ on $B_R$. This fixed metric allows us to contract tensors that would otherwise be at different points in space and time. We will also use the tensor
\begin{equation*}
\hat{\Theta}^{i\ov{j}} = \frac{1}{n-1}\left( (\tr{\gtilde}{g'}) g'^{i\ov{j}} - \gtilde^{i\ov{j}} \right)
\end{equation*}
and the operator
\begin{equation*}
\Delta' = g'^{i\ov{j}} \partial_i \partial_{\ov{j}}.
\end{equation*}
By the mean value inequality, for all $x$ in $B_R$,
\begin{equation}\label{gprimebound}
|g'_{i\ov{j}}(x) - g_{i\ov{j}}(x) | \leq CR.
\end{equation}
We let $\Phi$ be an operator on a matrix $A$ given by $\Phi(A) = \log \det A$. Since $\Phi$ is concave, for all $(x,t), (y,s) \in B_R \times [0,S)$
\begin{equation}\label{concave}
\sum_{i,j} \frac{\partial \Phi}{\partial a_{i\ov{j}}} \left( \gtilde(y,s) \right) \left( \gtilde_{i\ov{j}}(x,t) - \gtilde_{i\ov{j}}(y,s) \right) \geq \Phi(\gtilde(x,t)) - \Phi(\gtilde(y,s)).
\end{equation}
Using \eqref{logdet}, equation \eqref{concave} becomes
\begin{equation}\label{crbound1}
\dot{u}(x,t) - \dot{u}(y,s) + \sum_{i,j} \gtilde^{i\ov{j}}(y,s) \left(  \gtilde_{i\ov{j}}(y,s) - \gtilde_{i\ov{j}}(x,t) \right) \leq C R
\end{equation}
after applying the mean value inequality to $\tilde{F}$. We need to further bound the last term on the left hand side. Computing from the definition of $\gtilde$
\begin{align}\label{metricsholder}
 \sum_{i,j} & \gtilde^{i\ov{j}}(y,s)  \left(  \gtilde_{i\ov{j}}(y,s)  - \gtilde_{i\ov{j}}(x,t) \right)  =  \sum_{i,j} \gtilde^{i\ov{j}}(y,s) \left( \ghat_{i\ov{j}}(y,s) - \ghat_{i\ov{j}}(x,t) \right) \\ \nonumber
 &  + \frac{1}{n-1}  \sum_{i,j} \gtilde^{i\ov{j}}(y,s) \left( ( (\Delta u) g_{i\ov{j}} - u_{i\ov{j}} )(y,s) -  ( (\Delta u) g_{i\ov{j}} - u_{i\ov{j}} )(x,t) \right).
\end{align}
The mean value inequality in $Q(R,t_0)$ along with the uniform bounds for $\gtilde$ and $\ghat$ gives
\begin{equation}\label{mvigtilde}
\left\vert \sum_{i,j} \gtilde^{i\ov j}(y,s) \left( \ghat_{i \ov j}(y,s) - \ghat_{i\ov j}(x,t) \right) \right\vert \leq CR.
\end{equation}
Then with \eqref{metricsholder}, \eqref{mvigtilde}, and the uniform bounds for $u_{i\ov j}$ and $\gtilde_{i\ov j}$ equation \eqref{crbound1} becomes
\begin{align}\label{crbound2}
&  \frac{1}{n-1}  \sum_{i,j} \gtilde^{i\ov{j}}(y,s) \left( ( (\Delta' u) g'_{i\ov{j}} - u_{i\ov{j}} )(y,s) -  ( (\Delta u') g'_{i\ov{j}} - u_{i\ov{j}} )(x,t) \right) \\ \nonumber
& \ \ \  + \dot{u}(x,t) - \dot{u}(y,s) \leq CR.
\end{align}
Here is where we use the fixed metric $g'$. Since
\begin{equation*}
\sum_{i,j} \hat{\Theta}^{i\ov{j}}(y,s) u_{i\ov{j}}(z,r) = \sum_{i\ov{j}} \gtilde^{i\ov{j}}(y,s) \left( (\Delta' u)g'_{i\ov{j}} - u_{i\ov{j}} \right)(z,r),
\end{equation*}
for any $(z,r) \in B_R \times [0,S)$ we have the estimate
\begin{equation}\label{keyholder3}
\dot{u}(x,t) - \dot{u}(y,s) + \sum_{i,j} \hat{\Theta}^{i\ov{j}}(y,s) \left( u_{i\ov{j}}(y,s) - u_{i\ov{j}}(x,t) \right) \leq CR.
\end{equation}
Following \cite{G} (or \cite{GT,L}) we find finitely many unit vectors $\gamma_1, \ldots, \gamma_n$ in $\C^n$ and real valued functions $\beta_\nu$ on $B_R \times [0,S)$ with
\begin{equation*}
0 < C^{-1} < \beta_\nu < C
\end{equation*}
for $\nu = 1, \ldots, N$ such that
\begin{equation*}
\hat{\Theta}^{i\bar{j}}(y,s) = \sum_{\nu = 1}^N \beta_\nu (y,s) \left(\gamma_\nu\right)^i \ov{ \left( \gamma_\nu \right)^ j }.
\end{equation*}
For $\nu = 1, \ldots, N$ define
\begin{equation*}
w_\nu = u_{\gamma_\nu \ov{\gamma_\nu}}
\end{equation*}
and for $\nu = 0$,
\begin{equation*}
w_0 = -\dot{u}, \textrm{ and } \beta_0 = 1.
\end{equation*}
From \eqref{keyholder3},
\begin{equation}\label{keyholder4}
\sum_{\nu = 0}^N \beta_\nu (y,s) \left( w_\nu (y,s) - w_\nu (x,t) \right) \leq CR
\end{equation}
and for all $\nu = 0, 1, \ldots, N$,
\begin{equation}\label{keyholder5}
-\ddt w_v + \Theta^{i\ov{j}} \partial_i \partial_{\ov j} w_\nu \geq G 
\end{equation}
where $G$ is a uniformly bounded function using \eqref{keyholder1} and \eqref{keyholder2}. With the key estimates \eqref{keyholder4} and \eqref{keyholder5} we can complete the $C^{2+\alpha}(M,g)$ estimate exactly as in \cite{G} for the parabolic complex Monge-Amp\'ere equation. This finishes the proof of the main theorem.

\section{Acknowledgments}

The author thanks Valentino Tosatti for several helpful discussions and suggestions and especially thanks Ben Weinkove for his encouragement, support, and advice.

\bigskip
\noindent

\end{document}